\newcommand\rurl[1]{%
  \href{http://#1}{\nolinkurl{#1}}%
}
\newcommand{\ubar}[1]{\underaccent{\bar}{#1}}
\def\cl@part {\@elt {chapter}}
\crefname{equation}{}{} 
\crefname{lemma}{Lemma}{Lemmas}
\crefname{theorem}{Theorem}{Theorems}
\crefname{table}{Table}{Tables}
\crefname{figure}{Fig.}{Figs.}
\crefname{remark}{Remark}{Remarks}
\crefname{assumption}{Assumption}{Assumptions}
\crefname{section}{Section}{Sections}
\crefname{definition}{Definition}{Definitions}
\crefname{algorithm}{Algorithm}{Algorithms}
\crefname{proposition}{Proposition}{Propositions}
\crefname{appendix}{Appendix}{Appendices}
\let\abs\relax
\newcommand{\abs}[1]{\left\lvert#1\right\rvert}
\newcommand{\norm}[1]{\left\lVert#1\right\rVert}
\def\tilx{\tilde{x}}
\def\mbR{\mathbb{R}}
\def\mbRn {\mathbb{R}^n}
\def\mbZ{\mathbb{Z}}
\def\mbz#1{\mathbb{Z}_1^{#1}}
\def\mcF{\mathcal{F}}
\def\mcX{\mathcal{X}}
\def\mcL{\mathcal{L}}
\def\mcU{\mathcal{U}}
\def\mcV{\mathcal{V}}
\def\trieq{\triangleq}
\newtheorem{theorem}{Theorem}
\newtheorem{lemma}{Lemma}
\newtheorem{proposition}{Proposition}
\theoremstyle{definition}  \newtheorem{definition}{Definition}
\theoremstyle{definition} 
\newtheorem{assumption}{Assumption}
\theoremstyle{remark}  
\newtheorem{remark}{Remark}
\renewcommand*\env@matrix[1][\arraystretch]{%
  \edef\arraystretch{#1}%
  \hskip -\arraycolsep
  \let\@ifnextchar\new@ifnextchar
  \array{*\c@MaxMatrixCols c}}
\def \tilx {\tilde x}
\def\mcS{{\mathcal{S}}}
\def \mcK{{\mathcal{K}}}
\def\sos#1{{\Phi_\textup{sos}(#1)}} 
\def \bzero {\mathbf{0}}
\def \thetadot {\dot\theta} 
\def \onetoN#1 {1,\dots,#1}
\def \nu {{n_u}}
\def \ntheta {{n_\theta}}
\def \Ntheta {{N_\theta}}
\def \sym#1{\left\langle #1\right\rangle}
\def \fThetaV  {\mcF_\Theta^\mcV}
\def \ver#1{\textup{ver}(#1)}
\newcommand\fs@spaceruled{\def\@fs@cfont{\bfseries}\let\@fs@capt\floatc@ruled
  \def\@fs@pre{\vspace{3mm}\hrule height.8pt depth0pt \kern2pt}%
  \def\@fs@post{\kern2pt\hrule\relax\vspace{-4mm}}%
  \def\@fs@mid{\kern2pt\hrule\kern2pt}%
  \let\@fs@iftopcapt\iftrue}
\author{Pan Zhao
\thanks{P. Zhao is with the Department of 
Aerospace Engineering and Mechanics, University of Alabama, Tuscaloosa, AL 35487, USA. {\tt\small pan.zhao@ua.edu}. }%
}
\def\BibTeX{{\rm B\kern-.05em{\sc i\kern-.025em b}\kern-.08em
    T\kern-.1667em\lower.7ex\hbox{E}\kern-.125emX}}
\begin{document}

\title{\LARGE \bf Parameter-Dependent Control Lyapunov Functions for Stabilizing Nonlinear Parameter-Varying Systems}

\maketitle
\thispagestyle{empty}
\pagestyle{empty}

\begin{abstract}

This paper introduces the concept of parameter-dependent (PD) control Lyapunov functions (CLFs) for gain-scheduled stabilization of nonlinear parameter-varying (NPV) systems. It shows that given a PD-CLF, a min-norm control law can be constructed by solving a robust quadratic program. For polynomial control-affine NPV  systems, it provides convex conditions, based on the sum of squares (SOS) programming, 
to jointly synthesize a PD-CLF and a PD controller while maximizing the PD region of stabilization. Input constraints can be straightforwardly incorporated into the synthesis procedure.  Unlike traditional linear parameter-varying (LPV) methods that rely on linearization or over-approximation to get an LPV model, the proposed framework fully captures the nonlinearities of the system dynamics. 
The theoretical results are validated through numerical simulations, including a 2D rocket landing case study under varying mass and inertia.

\end{abstract}

\begin{keywords}
Nonlinear control, gain-scheduled control, robust control, convex optimization
\end{keywords}

\section{Introduction}\label{sec:introduction}
Control Lyapunov functions (CLFs) offer a method for designing stabilizing control inputs for nonlinear systems without the need for an explicit feedback control law \cite{sontag1983clf, Artstein1983clf}. Additionally, a CLF can be used to generate a pointwise min-norm control law, which is optimal for some meaningful cost functions \cite{freeman1996control-clf}. Given a nonlinear system, verification and synthesis of a CLF has been studied using sum of squares (SOS) optimization techniques \cite{tan2004searching-clf-sos,dai2022convex-clf-cbf}. However, existing work on CLF typically considers only autonomous systems or time-varying systems \cite{jiang2009stabilization-ntv-clf}. However, the concept of CLF for nonlinear parameter-varying systems has not been studied. 

A nonlinear parameter-varying (NPV) system is a nonlinear system that explicitly depends on some time-varying parameters. It can be considered as a generalization of a linear parameter-varying (LPV) system \cite{Shamma92gain-CSM,Rugh00gs_survey, Moh12LPVBook} in the sense that the dynamics of an NPV system can have a nonlinear structure. In fact, many nonlinear time-varying systems in the real world can be modeled as an NPV system, such as a rocket with varying mass due to propellant consumption, a ground vehicle experiencing varying friction coefficients, and an aircraft subject to varying disturbances.  Similar to the LPV paradigm \cite{Shamma92gain-CSM,Rugh00gs_survey, Moh12LPVBook},  modeling a nonlinear time-varying system as an NPV system allows us to design gain-scheduled nonlinear controllers that automatically adjust the control law based on real-time measurement and estimation of the parameters. 



{\bf Related Work}: {\it LPV control}: In LPV control framework \cite{Shamma92gain-CSM,Rugh00gs_survey, Moh12LPVBook}, a nonlinear system is first approximated by an LPV model, $\dot x = A(\theta(t))x+ B(\theta(t))u$, whether $\theta(t)$ is the vector of scheduling parameters that can be measured or estimated online 
 and characterize the variation of system dynamics. 
 Given the LPV model, an LPV controller can be designed to ensure stability and performance of the closed-loop system, e.g., using linear matrix inequalities \cite{Wu96Induced,Apk98}. A major issue with the LPV framework is that the LPV model used for control design is either {\it only locally valid} when Jacobian linearization is used to obtain the LPV model, or is an {\it over-approximation} of the real dynamics when the quasi-LPV approach is used, leading to conservative performance \cite{Rugh00gs_survey}. 
 Additionally,  in the LPV framework, the control law is limited to be linear (although it can be parameter-dependent).


{\it Verification and synthesis of CLF and control barrier functions (CBFs)}: Inspired by CLFs, CBFs are proposed to design control inputs to ensure set-invariance of nonlinear systems without resorting to a specific control law \cite{ames2016cbf-tac}. Due to the similarity between a CLF and a CBF, approaches developed for verifying and synthesizing one of them can be readily applied to the other.  \cite{tan2004searching-clf-sos} proposed to iteratively search for CLFs via SOS optimization without searching for a controller. 
SOS optimization based CBF synthesis has been studied in   \cite{wang2018permissive-sos, clark2021verification} without considering input constraints, and in \cite{dai2022convex-clf-cbf,zhao2023convex-cbf} with consideration of input constraints. 
However, all these existing works are for nonlinear autonomous systems, and cannot handle the NPV systems considered in this manuscript. 

{\bf Statement of Contributions}:
This paper introduces the concepts of parameter-dependent CLF (PD-CLF) for stabilizing NPV systems. It shows that given a PD-CLF, a min-norm control law can be constructed by solving a robust quadratic program and is locally Lipschitz. Additionally, it presents a method for
jointly synthesizing a PD-CLF and a PD controller while maximizing the PD region of stabilization based on SOS programming. 
The theoretical results are validated through numerical simulations, including a 2D rocket landing case study under varying mass and inertia.

{{\it Notations}: Let $\mathbb{R}$, $\mathbb{R}^n$,  $\mathbb{R}^{m\times n}$ and $\mcS^n $ denote the sets of real numbers, $n$-dimensional real vectors, $m$ by $n$ real  matrices, and $n\times n$ real symmetric matrices, respectively. $\mbZ_i^n$ denotes the integer set $\{i, i+1,\cdots,n\}$.
$\mbR[x]$ denotes the set of polynomials with real coefficients, while $\mcS^n[x] $ and $\mbR^{m\times n}[x]$ denote the sets of $n\times n$ real symmetric polynomial matrices and of ${m\times n}$ real matrices, respectively, whose entries are polynomials of $x$ with real coefficients. Given a matrix $A$,  $\langle A \rangle$ denotes $A+A^T$. Finally, we let 
 $\sos{x}$ denote the set of SOS polynomials of $x$. 
}

\section{Preliminaries}\label{sec:preliminaries}
Consider a nonlinear parameter-varying (NPV) system 
\begin{equation}\label{eq:dynamics}
    \dot{x} = f(x,\theta)+g(x,\theta)u, 
\end{equation}
where $x(t)\in \mbR^{n}$, $u(t) \in \mbR^m$, $\theta(t)\in\mbR^{n_\theta}$ is the vector of parameters that can be time-varying and measurable online, and $f:\mbR^n\times \mbR^\ntheta\ \rightarrow \mbR^n$ and  $g:\mbR^n\times
\mbR^\ntheta \rightarrow \mbR^{n\times m}$ are locally Lipschitz in $\theta$ and $x$.   As an example, $\theta(t)$ could represent the payload and propeller control effectiveness of a delivery drone, the friction coefficient of a ground vehicle, the propellant mass of a spacecraft or a rocket, or the disturbance forces applied to an aircraft. 
\begin{remark} Unlike existing NPV work \cite{
fu2018hinf-npv,fu2019exponential-npv,lu2020domain-npv,yu2024hinf-npv},  
    we do not require $\dot \theta$ to be online measurable. This makes our solution more practical as in the real world it is often challenging to obtain $\dot \theta$.  
\end{remark}
\begin{assumption} \label{assump:theta-thetadot-range}
The parameters and their derivatives satisfy
\begin{equation}\label{eq:theta-thetadot-constrs}
   \theta(t) \in \Theta \textup{ and } \thetadot(t)\in \mcV,~\forall t\geq 0, 
\end{equation}
 where $\Theta$ is a semi-algebraic set defined by a finite number of polynomials $h_i(\theta)$ as $\Theta\trieq \{\theta\in\mbR^\ntheta: h_i(\theta)\geq 0, i\in \mbz{\Ntheta}\}$,~and $\mcV$ is a hyper-rectangular set defined by $\mcV \trieq \{ v\in\mbR^\ntheta: \ubar v_i\leq  v_i\leq \bar v_i, i\in\mbz{\ntheta}\}$. 
\end{assumption}For ease of exposition, we denote the set of admissible trajectories of $\theta$ satisfying \cref{eq:theta-thetadot-constrs} as
\begin{equation}\label{eq:F-Theta-mcV-defn}
 \mcF_\Theta^\mcV\trieq \left\{\theta:\mbR^+\rightarrow \mbR^\ntheta| \theta(t) \in \Theta, \dot\theta(t)\in \mcV,\ \forall t\geq 0\right\}
\end{equation}
For brevity, we often omit the dependence of variables and functions on $t$ hereafter. 

Without loss of generality, we assume the equilibrium point of \cref{eq:dynamics} is $x=u=\bzero$, which indicates that $f(0,\theta)=0$ for all admissible trajectoreis of $\theta$.

\section{Parameter-Dependent CLFs for Stabilizing NPV Systems}\label{sec:pd-clf}
\begin{definition}
The equilibrium point $x=0$  of system \cref{eq:dynamics} is uniformly asymptotically stabilizable if there exists an admissible controller $u(x,\theta)$ that is locally Lipschitz 
such that the closed loop system $ \dot{x} = f(x,\theta)+g(x,\theta)u(x,\theta)$ is uniformly asymptotically stable (UAS), i.e., there exist a class $\mcK\mcL$ function $\beta$ and a positive constant $c$, independent of $t_0$, such that 
    \begin{equation}
   \hspace{-3mm}    \norm{x(t)} \! \leq \! \beta \left(\norm{x(t_0)}, t\!-\!t_0\right), \   \forall t\geq t_0 \geq 0, \ \forall \norm{x(t_0)}\!<\! c. 
    \end{equation}
\end{definition}

\begin{definition}\label{def:pd-clf}
   A continuously differentiable function $V(x,\theta)$ is a PD-CLF for \cref{eq:dynamics} if there exist continuous positive definite functions $\alpha_1$, $\alpha_2$, and  $\alpha$ such that the following conditions hold:   
\begin{subequations}\label{eq:clf-cond}
    \begin{align}
   &   \alpha_1(\norm{x}) \leq V(x,\theta)  \leq \alpha_2(\norm{x}),\  \forall \theta \in \Theta, 
     \label{eq:clf-cond-V-bnd}   \\
 &  \inf_{u} { L_f V + L_g V u +  \frac{\partial V}{\partial \theta }\thetadot} \leq -\alpha( \norm{x}) ,\ (\theta,\dot \theta)\in \Theta \!\times\! \mcV, \label{eq:clf-cond-derivative} 
       \vspace{-10mm}
    \end{align}
\end{subequations} 
where $L_fV= \frac{\partial V}{\partial x}f$ and $L_gV= L_g V$.
\end{definition}
A geometric illustration of a PD-CLF is given in \cref{fig:pd-clf-illustration}.
\begin{figure}
    \centering
    \includegraphics[width=0.4\columnwidth]{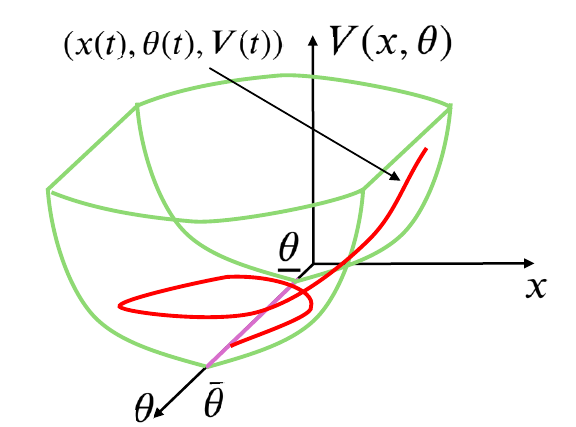}
    \caption{Geometric illustration of a PD-CLF $V(x,\theta)$ for a one-dimensional system with a single parameter ($n=n_\theta=1$ ) in the $x$-$\theta$ plane and an exemplary trajectory of $(x(t),\theta(t),V(t))$ (denoted by the red line) under stabilizing inputs generated by $V$. Notice that the trajectory will eventually approach the $\theta$-axis (corresponding to $x=0$) and stay there regardless of the trajectory of $\theta$. }
    \label{fig:pd-clf-illustration}
    \vspace{-5mm}
\end{figure}

\subsection{Constructing min-norm control law via robust quadratic programs}
Given a valid PD-CLF, at each time $t$, given  $\theta(t)$, $\theta(t)$, $\thetadot(t)$, we can consider all control inputs that satisfy \cref{eq:clf-cond-derivative}:
However, due to the dependence on $\dot\theta$, direct use of \cref{eq:clf-cond-derivative} will lead to a control law dependent on $\dot \theta$ that is not accessible online.  To address this issue, noticing that \cref{eq:clf-cond-derivative} is affine w.r.t. $\dot \theta$, we derive an equivalent condition for \cref{eq:clf-cond-derivative}:
\begin{equation}\label{eq:clf-cond-derivative-sufficient}
\inf_{u}{ L_f V \!+ \!L_g Vu \! +\! \frac{\partial V}{\partial \theta }v } 
\leq -\alpha(\norm{x}),\ \forall (\theta,v) \in \Theta\times\ver{\mcV}, 
\end{equation}
where $\mcV$ is the hyper-rectangular set of admissible $\dot\theta$ values and $\ver{\mcV}$ denote the set of vertices of $\mcV$. As a result, we can construct the control law as
\begin{equation}\label{eq:K-pdclf}
   K_\textup{pd-clf}(x,\theta)  = \! \left\{u \!\in \!\mbR^m\! :  L_f V \!+\! L_g V u\! + \! \frac{\partial V}{\partial \theta }v \!\leq\! -\alpha(\norm{x}), \ \forall v  \!\in  \!\ver{\mcV}\right\}  \!\!
\end{equation}
In particular, we can determine the min-norm control law \cite{freeman1996inverse-clf} by solving a robust QP problem with the constraint \cref{eq:clf-cond-derivative-sufficient} at each time instant $t$: 
\begin{subequations}\label{eq:min-norm-formulation}
  \begin{align}
 &u^\ast(x,\theta) = \min_{u}\  \norm{u}^2 \\
    \textup{s.t. } & L_f V\!+ \!L_g Vu \!+\! \frac{\partial V}{\partial \theta }v 
\leq -\alpha(\norm{x}), \ \forall v \in \textup{ver}(\mcV).   \label{eq:min-norm-formulation-b}
     \end{align}
\end{subequations}
The following lemma establishes the local Lipschitz continuity of the control law obtained by solving \cref{eq:min-norm-formulation}.

\begin{lemma}\label{lemma:qp-control-local-lipschitz}
If $V(x,\theta)$ is a PD-CLF according to \cref{def:pd-clf} and both $\frac{\partial V}{\partial x}$ and $\alpha$ are locally Lipschitz, then the control law $u^*(x,\theta)$ from solving \cref{eq:min-norm-formulation} is locally Lipschitz.     
\end{lemma}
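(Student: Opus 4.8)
## Proof Proposal

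The plan is to view the optimizer $u^*(x,\theta)$ as a parametric solution map of the strictly convex QP \cref{eq:min-norm-formulation} and invoke a known Lipschitz-stability result for such programs. The objective $\norm{u}^2$ is strongly convex in $u$ with a fixed Hessian $2I$, so for every $(x,\theta)$ the feasible set being nonempty (guaranteed by the PD-CLF property in \cref{def:pd-clf}, after passing to the vertex form \cref{eq:clf-cond-derivative-sufficient}) implies the minimizer is unique and well defined. The finitely many constraints in \cref{eq:min-norm-formulation-b}, one per vertex $v\in\ver{\mcV}$, are affine in $u$ with coefficient data $L_gV(x,\theta)$ and right-hand side $-\alpha(\norm{x}) - L_fV(x,\theta) - \frac{\partial V}{\partial\theta}(x,\theta)v$. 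By hypothesis $\frac{\partial V}{\partial x}$ and $\alpha$ are locally Lipschitz, and since $f,g$ are locally Lipschitz in both arguments while $V$ is $C^1$ (so $\frac{\partial V}{\partial\theta}$ is continuous), all the problem data depend on $(x,\theta)$ in a way that is at least locally Lipschitz on the region of interest — I would fix a compact neighborhood of any point $(x_0,\theta_0)$ and note uniform Lipschitz bounds there.

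First I would assemble the QP data as locally Lipschitz functions of $(x,\theta)$: the constraint matrix $A(x,\theta)\in\mbR^{|\ver{\mcV}|\times m}$ whose rows are copies of $L_gV(x,\theta)$, and the vector $b(x,\theta)$ whose entries are $-\alpha(\norm x)-L_fV(x,\theta)-\frac{\partial V}{\partial\theta}(x,\theta)v$ over $v\in\ver{\mcV}$. Each row of $A$ is locally Lipschitz because $\frac{\partial V}{\partial x}$ and $g$ are; each entry of $b$ is locally Lipschitz because $\alpha$, $\frac{\partial V}{\partial x}$, $f$ are locally Lipschitz and $\frac{\partial V}{\partial\theta}$ is continuous (hence locally Lipschitz if we additionally know it, or we restrict attention to the compact set where continuity gives a modulus bound; cleanest is to assume enough regularity of $V$, which is consistent with the SOS/polynomial setting of the paper). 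Second, I would cite the standard result that the solution map of a strongly convex quadratic program with fixed Hessian and Lipschitz-varying linear/constraint data is locally Lipschitz in the data — this follows e.g.\ from the fact that $u^*$ is the projection of $0$ onto a polyhedron whose defining data varies Lipschitz-continuously, and projection onto a moving polyhedron is Lipschitz as long as feasibility is maintained (a consequence of Hoffman's bound / Robinson's results on the Lipschitz stability of polyhedral systems, or Lemma-type statements already used in \cite{dai2022convex-clf-cbf}). Composing the Lipschitz data map with the Lipschitz solution map yields local Lipschitz continuity of $u^*(x,\theta)$.

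The main obstacle is the potential loss of Lipschitz continuity at points where the active set changes or where Hoffman's constant blows up, i.e.\ where strict feasibility degenerates. The $\frac{\partial V}{\partial\theta}v$ term, being linear in $u$-free data, does not affect the constraint geometry, so the only concern is whether $L_gV(x,\theta)=0$ simultaneously while the right-hand side is negative (infeasibility) or whether the polyhedron's "aspect ratio" degenerates. But the PD-CLF condition \cref{eq:clf-cond-derivative} guarantees $\inf_u(\cdot)\le -\alpha(\norm x)$ for all $(\theta,\dot\theta)\in\Theta\times\mcV$; for $x\ne 0$ this forces either $L_gV\ne 0$ or the unconstrained term already satisfies the inequality, and in a small enough neighborhood of a fixed $(x_0,\theta_0)$ with $x_0\ne0$ one gets a uniform Hoffman constant by compactness and continuity. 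At $x_0=0$ the right-hand side $-\alpha(0)=0$ and $L_fV(0,\theta)=0$ (since $f(0,\theta)=0$) and $\frac{\partial V}{\partial x}(0,\theta)=0$ (from \cref{eq:clf-cond-V-bnd}), so $u=0$ is feasible and is trivially the minimizer; handling the Lipschitz estimate across the origin is the delicate piece, and I would argue it by showing $\norm{u^*(x,\theta)}$ is controlled by the violation $\max(0, L_fV + \frac{\partial V}{\partial\theta}v + \alpha(\norm x))/\norm{L_gV}$, with numerator vanishing at least linearly in $\norm x$ and the ratio staying bounded. Wrapping these neighborhood-wise estimates into a single local Lipschitz statement completes the argument.
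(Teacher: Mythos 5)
Your route (parametric-QP stability via Hoffman/Robinson-type bounds for projection onto a moving polyhedron) is genuinely different from the paper's, and it misses the structural fact that makes the paper's argument elementary: every vertex constraint in \cref{eq:min-norm-formulation-b} has the \emph{same} coefficient vector $A(\zeta)=(L_gV)^T$ --- only the right-hand sides differ across $v\in\ver{\mcV}$. Hence the feasible set is a single half-space (or all of $\mbR^m$) with right-hand side $c(\zeta)=\min_i b_i(\zeta)$, and the minimizer has the closed form \cref{eq:u-zeta-solution}: $u^*=0$ if $c\ge 0$ and $u^*=\frac{c}{\norm{A}^2}A$ if $c<0$, well defined because \cref{eq:clf-cond-derivative-sufficient} gives $A(\zeta)=0\Rightarrow c(\zeta)\ge 0$. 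The paper then checks Lipschitz continuity directly from this formula, region by region, with no need for Hoffman constants or constraint-qualification machinery. Your general-QP route, by contrast, needs a uniform regularity constant on the neighborhood, and that is exactly what degenerates where $L_gV\to 0$ with the constraint active --- the very situation the PD-CLF definition permits.

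That degeneracy is where your proposal has a genuine gap rather than just a longer path. You correctly identify the delicate set (the origin, and more generally points where $c=0$ and $L_gV=0$), but the resolution you sketch does not close it: (i) showing $\norm{u^*}$ is \emph{bounded} by a ratio of the constraint violation to $\norm{L_gV}$ is strictly weaker than showing $u^*$ is \emph{Lipschitz} there; (ii) the claim that this ratio stays bounded does not follow from the stated hypotheses --- near $x=0$ both the numerator and $\norm{L_gV}$ vanish (since $\frac{\partial V}{\partial x}(0,\theta)=0$), and controlling their quotient is precisely the classical small-control-property issue, which the lemma does not assume; and (iii) the appeal to Hoffman's bound presupposes a uniform constant on the neighborhood, which is unavailable when the single constraint gradient $L_gV$ can vanish. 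A smaller slip: you suggest that restricting to a compact set lets continuity of $\frac{\partial V}{\partial\theta}$ stand in for a Lipschitz modulus; continuity on a compact set gives boundedness and uniform continuity, not a Lipschitz constant, so the Lipschitz regularity of the right-hand-side data has to be assumed (as the paper implicitly does) rather than derived. To repair your argument you would either have to import the explicit half-space reduction and formula \cref{eq:u-zeta-solution} (at which point you have reproduced the paper's proof), or add a hypothesis that rules out the degenerate points (e.g.\ $L_gV\neq 0$ on the region of interest, as in the cited CLF-QP literature), which would weaken the lemma relative to what the paper claims.
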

\begin{proof}
    Condition \cref{eq:min-norm-formulation-b} can be rewritten as 
    \begin{equation}\label{eq:clf-deriv-cond-linear-form}
        A(\zeta)^T u \leq c(\zeta) = \min_{i\in |\ver{\mcV}|} b_i(\zeta),
    \end{equation}
  where $\zeta = [x^T, \theta^T]^T$, $A(\zeta) = \left(L_g V \right)^T$, $b_i(\zeta) = - L_f V  -\frac{\partial V}{\partial \theta }v^i-\alpha(\norm{x})$ with $v^i \in \ver{\mcV}$, and  $|\ver{\mcV}|= 2^{n_\theta}$ is the number of vertices of $\mcV$.  
As a result, \cref{eq:min-norm-formulation} can be written as 
\begin{equation}\label{eq:min-norm-formulation-linear-form}
 u^*(\zeta)  = \min_{u}  \norm{u}^2 \quad 
   \textup{s.t. }   A(\zeta)^T u \leq c(\zeta)   
 \end{equation}
The solution for \cref{eq:min-norm-formulation-linear-form} is  given by \cite{freeman1996inverse-clf,ames2016cbf-tac}
\begin{equation}\label{eq:u-zeta-solution}
u^*(\zeta)
\;=\;
\begin{cases}
0, 
   & \text{if } c(\zeta)\ge 0,\\[6pt]
\dfrac{c(\zeta)}{\|A(\zeta)\|^2}\,A(\zeta),
   & \text{if } c(\zeta)< 0.
\end{cases}    
\end{equation}
Note that \cref{eq:clf-cond-derivative-sufficient} indicates that 
\begin{equation}\label{eq:A=0-indicates-c>=0}
    A(\zeta) = 0 \Rightarrow c(\zeta)\geq 0
\end{equation}
As a result, the expression for the case of $c(\zeta)< 0$ in  \cref{eq:u-zeta-solution} is always well defined. 

We next verify that $u^*(\zeta)$ is locally Lipschitz in $\zeta$:

\begin{itemize}
\item \emph{Case $c(\zeta)\ge0$.}
Here $u^*(\zeta)=0$ is a constant function, which is trivially (globally) Lipschitz.

\item \emph{Case $c(\zeta)<0$.} 
Since $A(\zeta)\neq 0$ in this region, $\|A(\zeta)\|\ge \beta>0$ for some local bound $\beta>0$ (by continuity). 
Since the product of locally Lipschiz functions is locally Lipschitz, the maps $\zeta \mapsto A(\zeta)$ and $\zeta \mapsto b_i(\zeta)$ for ${i\in |\ver{\mcV}|}$ are locally Lipschitz. Consequently, $\zeta \mapsto 1/\|A(\zeta)\|^2$ is also locally Lipschitz.  Further considering the fact that the minimum (or maximum) of finitely many Lipschitz functions is still Lipschitz, the map $\zeta \mapsto c(\zeta)$ is also locally Lipschitz. Hence $u^*(\zeta)$ is locally Lipschitz in any neighborhood where $c(\zeta)<0$.

\item \emph{Boundary $c(\zeta)=0$.}
As $c(\zeta)\to 0^-$, $u^*(\zeta) = \frac{c(\zeta)}{\|A(\zeta)\|^2}A(\zeta)\to 0$, matching the solution on the $c(\zeta)\ge0$ side.  Thus there is no jump discontinuity.  Being piecewise linear in $\zeta$ (through the Lipschitz data) and continuous across the boundary, $u^*(\zeta)$ remains locally Lipschitz.
\end{itemize}
\end{proof}
\begin{remark}
In \cite{ames2016cbf-tac}, the authors prove the local Lipschitz continuity of their QP-based controller, which unifies control Lyapunov functions (CLFs) and control barrier functions (CBFs) for  nonlinear time-invariant systems, under the stronger assumption that 
$\frac{\partial V}{\partial x}\neq 0$  everywhere in the region of interest. By contrast, \cref{lemma:qp-control-local-lipschitz} establishes the local Lipschitz continuity without that condition and even for NPV systems.
\end{remark}

\begin{lemma}
   Given an NPV system \cref{eq:dynamics} satisfying \cref{assump:theta-thetadot-range}, if there exists a PD-CLF $V(x,\theta)$ with locally Lipschitz $\frac{\partial V(x,\theta)}{\partial x}$ and $\alpha(\norm{x})$,  then system \cref{eq:dynamics} can be uniformly asymptotically stabilized.    
\end{lemma}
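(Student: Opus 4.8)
The plan is to close the loop with the locally Lipschitz min-norm law $u^\ast(x,\theta)$ defined by \cref{eq:min-norm-formulation}, whose regularity is supplied by \cref{lemma:qp-control-local-lipschitz}, and then use $V$ itself as a parameter-dependent (hence effectively time-varying) Lyapunov function to certify uniform asymptotic stability. First I would argue well-posedness of the closed loop: for each fixed $\theta$, the vector field $\zeta\mapsto f(x,\theta)+g(x,\theta)u^\ast(x,\theta)$ is locally Lipschitz in $x$ (a composition and product of locally Lipschitz maps, using \cref{lemma:qp-control-local-lipschitz} together with the standing assumption on $f,g$), while $t\mapsto\theta(t)$ is absolutely continuous, hence measurable and locally bounded. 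Therefore the closed-loop right-hand side meets the Carath\'eodory conditions and, for every initial state in a neighborhood of the origin, admits a unique solution on a maximal interval, with the Lyapunov estimate below ruling out finite-time escape.

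Next I would establish the decrease of $V$ along closed-loop trajectories. By construction $u^\ast(x,\theta)\in K_\textup{pd-clf}(x,\theta)$, i.e.\ $L_fV+L_gV\,u^\ast(x,\theta)+\frac{\partial V}{\partial\theta}v\le-\alpha(\norm{x})$ for every vertex $v\in\ver{\mcV}$. Since this quantity is affine in $v$ and, by \cref{assump:theta-thetadot-range}, $\thetadot(t)\in\mcV=\textup{conv}(\ver{\mcV})$ for almost every $t$, the same inequality persists with $v$ replaced by the actual $\thetadot(t)$. Hence $\frac{d}{dt}V(x(t),\theta(t))=L_fV+L_gV\,u^\ast+\frac{\partial V}{\partial\theta}\thetadot\le-\alpha(\norm{x(t)})$ for a.e.\ $t$ along any closed-loop solution. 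Combined with the sandwich bound \cref{eq:clf-cond-V-bnd} (which holds for all $\theta\in\Theta$), this is precisely the hypothesis of the standard time-varying Lyapunov theorem: restricting to a ball $\norm{x}\le r$ on which the continuous positive-definite functions $\alpha_1,\alpha_2,\alpha$ can be lower/upper bounded by class-$\mcK$ functions, a comparison-lemma argument produces a class-$\mcK\mcL$ function $\beta$ and a constant $c>0$ with $\norm{x(t)}\le\beta(\norm{x(t_0)},t-t_0)$ for all $t\ge t_0\ge0$ and all $\norm{x(t_0)}<c$. Crucially, because $\alpha_1,\alpha_2,\alpha$ do not depend on $\theta$, both $\beta$ and $c$ are independent of $t_0$ and of the particular admissible parameter signal $\theta\in\fThetaV$, which is exactly uniform asymptotic stabilizability.

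The main obstacles are bookkeeping rather than conceptual. The delicate points are: (i) $V$ is only $C^1$ and the closed loop only Carath\'eodory, so $\frac{d}{dt}V$ is controlled only almost everywhere — this suffices because $t\mapsto V(x(t),\theta(t))$ is locally absolutely continuous along solutions; (ii) $\alpha_1,\alpha_2,\alpha$ are assumed merely continuous positive definite rather than class $\mcK$, which forces the argument to be carried out on a compact ball and hence yields a local $\mcK\mcL$ estimate — but only local stabilizability is what the statement asks for; and (iii) obtaining uniformity over the entire family $\fThetaV$ of admissible parameter trajectories, which I handle by noting that the Lyapunov data $\alpha_1,\alpha_2,\alpha$ are parameter-independent. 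The nonsmoothness of $c(\zeta)=\min_i b_i(\zeta)$ plays no role here, since the proof invokes only the vertex inequalities, not any regularity of $c$.
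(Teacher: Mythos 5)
Your proposal is correct and follows essentially the same route as the paper: close the loop with the min-norm law of \cref{eq:min-norm-formulation}, invoke \cref{lemma:qp-control-local-lipschitz} for local Lipschitz continuity, use the PD-CLF conditions (with the vertex-to-convex-hull argument that the paper phrases as the equivalence of \cref{eq:clf-cond-derivative} and \cref{eq:clf-cond-derivative-sufficient}) to get $\dot V\le-\alpha(\norm{x})$ together with the sandwich bound \cref{eq:clf-cond-V-bnd}, and conclude via the standard non-autonomous Lyapunov theorem. Your additional remarks on Carath\'eodory well-posedness, almost-everywhere differentiation of $t\mapsto V(x(t),\theta(t))$, and converting the continuous positive definite bounds to class-$\mcK$ functions on a compact ball are just careful bookkeeping of steps the paper leaves implicit in its citation of \cite[Theorem~4.8]{khalil2002nonlinear-book}.
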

\begin{proof}
  Consider the control law from solving problem \cref{eq:min-norm-formulation}.  Due to the condition \cref{eq:clf-cond-derivative} satisfied by $V(x,\theta)$, and the equivalency between \cref{eq:clf-cond-derivative} and \cref{eq:clf-cond-derivative-sufficient}, problem \cref{eq:min-norm-formulation} is always feasible.  According to \cref{lemma:qp-control-local-lipschitz}, the min-norm control law in \cref{eq:min-norm-formulation} is locally Lipschitz. Considering the closed-loop dynamics under this control law, and \cref{assump:theta-thetadot-range}, we have 
\begin{equation}\label{eq:lyapunov-nonauto-1}
    \dot V =  L_f V + L_g V u +  \frac{\partial V}{\partial \theta }\thetadot\leq -\alpha( \norm{x}),\quad  \forall t\geq0, \forall x.
\end{equation}
Additionally, \cref{eq:clf-cond-V-bnd} indicates that  
\begin{equation}\label{eq:lyapunov-nonauto-2}
    \alpha_1(\norm{x}) \leq V(x(t),\theta(t))  \leq \alpha_2(\norm{x}),\quad  \forall t\geq0, \forall x.
\end{equation}
By applying Lyapunov stability theory for non-autonomous theorem (\cite[Theorem~4.8]{khalil2002nonlinear-book}) to \cref{eq:lyapunov-nonauto-1,eq:lyapunov-nonauto-2}, we conclude that the closed-loop system under the control law in \cref{eq:min-norm-formulation} is uniformly asymptotically stable.   
\end{proof}

\subsection{Robustness of min-norm control law against input gain uncertainty}\label{sec:robustness}
An advantage of the min-norm control law in \cref{eq:min-norm-formulation} is that it is inherently robust against uncertainty in the input gain.  Suppose the original system \cref{eq:dynamics} is perturbed in a way such that the control $u$ is multiplied by a positive gain $\lambda$, i.e., the perturbed system takes the form $\dot x = f(x) + g(x) \lambda u$. Then, the min-norm control law constructed using the nominal dynamics will uniformly asymptotically stabilize the perturbed system for all $\lambda>1$; that is, the closed-loop system with the min-norm control law has an infinite gain margin. This observation is formalized in the following lemma. 
\begin{lemma}\label{lemma:min-norm-robustness}
Suppose $V(x,\theta)$ is a PD-CLF for the system \cref{eq:dynamics}. Then, under the min-norm control law \cref{eq:min-norm-formulation} computed using the nominal dynamics of \cref{eq:dynamics}, the origin of the perturbed system $\dot x = f(x,\theta) + g(x,\theta) \lambda u$ for all $\lambda>0$ is uniformly asymptotically stable for all admissible trajectories of $\theta$. 
\end{lemma}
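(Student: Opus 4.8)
The plan is to show that the min-norm control law $u^*(x,\theta)$, computed from the nominal dynamics via \cref{eq:min-norm-formulation}, still makes $\dot V$ strictly negative along trajectories of the perturbed system $\dot x = f(x,\theta) + g(x,\theta)\lambda u^*$, and then invoke the same non-autonomous Lyapunov argument used in the preceding lemma. The key observation is the explicit closed-form solution \cref{eq:u-zeta-solution}: when $c(\zeta)\ge 0$ we have $u^*=0$ and the perturbation is irrelevant, while when $c(\zeta)<0$ we have $L_g V\, u^* = A(\zeta)^T u^* = c(\zeta) = \min_{i} b_i(\zeta) < 0$, i.e. the nominal control drives the CLF-decrease term all the way to the active bound.

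First I would write, along the perturbed closed-loop trajectory with the true parameter derivative $\dot\theta\in\mcV$,
\begin{equation*}
\dot V = L_f V + L_g V\,(\lambda u^*) + \frac{\partial V}{\partial\theta}\dot\theta
       = L_f V + \frac{\partial V}{\partial\theta}\dot\theta + \lambda\, A(\zeta)^T u^*.
\end{equation*}
In the region $c(\zeta)\ge 0$, $u^*=0$, so $\dot V = L_f V + \tfrac{\partial V}{\partial\theta}\dot\theta$; since $\dot\theta\in\mcV$ is a convex combination of the vertices $v^i\in\ver{\mcV}$ and $L_f V + \tfrac{\partial V}{\partial\theta}v^i = -b_i(\zeta) \le -c(\zeta) \le 0$ for every vertex, convexity (affineness in $\dot\theta$) gives $\dot V \le -c(\zeta) \le -\alpha(\norm{x})$ exactly as in the nominal case — here I use that $c(\zeta)$ already incorporates the $-\alpha(\norm{x})$ offset through the $b_i$. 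In the region $c(\zeta)<0$, the same vertex/convexity argument gives $L_f V + \tfrac{\partial V}{\partial\theta}\dot\theta \le -c(\zeta)$ wait — more carefully, $L_f V + \tfrac{\partial V}{\partial\theta}v^i = -b_i(\zeta) - \alpha(\norm{x}) \le -c(\zeta) - \alpha(\norm{x})$, hmm, I would instead keep the bookkeeping as $L_f V + \tfrac{\partial V}{\partial\theta}\dot\theta + L_g V u^*_{\mathrm{nom-feas}} \le -\alpha(\norm{x})$ and note $A^T u^* = c(\zeta) < 0$, so multiplying that one term by $\lambda>1$ only makes it more negative: $\dot V = \big(L_f V + \tfrac{\partial V}{\partial\theta}\dot\theta\big) + \lambda c(\zeta) \le \big(L_f V + \tfrac{\partial V}{\partial\theta}\dot\theta\big) + c(\zeta) \le -\alpha(\norm{x})$, where the last inequality is nominal feasibility of $u^*$ at the true $\dot\theta$. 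For $0<\lambda\le 1$ the statement as written still asks for stabilization; there $\lambda c(\zeta) \ge c(\zeta)$, so this naive bound fails, and I would instead argue that for $c(\zeta)<0$ we still have $\lambda c(\zeta) < 0$, hence $\dot V \le L_f V + \tfrac{\partial V}{\partial\theta}\dot\theta < 0$ strictly wherever $A(\zeta)\ne 0$, while on $\{A(\zeta)=0\}$ the implication \cref{eq:A=0-indicates-c>=0} forces $c(\zeta)\ge 0$ and we are back in the $u^*=0$ case with $\dot V \le -\alpha(\norm{x})$; combining, $\dot V$ is negative definite in $x$ uniformly in $\theta$, which suffices for UAS.

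Having established the decrease condition, I would close by noting that \cref{eq:clf-cond-V-bnd} still bounds $V(x(t),\theta(t))$ between $\alpha_1(\norm{x})$ and $\alpha_2(\norm{x})$ independently of the perturbation, and that $u^*$ is locally Lipschitz by \cref{lemma:qp-control-local-lipschitz} so the perturbed closed loop has well-defined solutions; then \cite[Theorem~4.8]{khalil2002nonlinear-book} applied to the perturbed closed-loop system yields uniform asymptotic stability of the origin. The main obstacle is the range of $\lambda$: the clean "infinite gain margin" argument ($\lambda c(\zeta)\le c(\zeta)$) only works for $\lambda\ge 1$, so to cover all $\lambda>0$ as the lemma states I must fall back on the strict-negativity-plus-$\alpha$-bound argument above, being careful that on the boundary $c(\zeta)=0$ and on $\{A=0\}$ the two cases match up continuously; I would double-check whether the intended statement is $\lambda>0$ or $\lambda>1$, since the discussion preceding the lemma explicitly says "for all $\lambda>1$" and "infinite gain margin," and adjust the proof's emphasis accordingly.
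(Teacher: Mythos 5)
Your core argument coincides with the paper's proof: rewrite the decrease constraint as $A(\zeta)^T u \le c(\zeta)$, use the explicit min-norm solution \cref{eq:u-zeta-solution}, split on the sign of $c(\zeta)$ (when $c(\zeta)\ge 0$ the control is zero and the perturbation is irrelevant; when $c(\zeta)<0$ one has $A(\zeta)^T(\lambda u^*)=\lambda c(\zeta)\le c(\zeta)$ for $\lambda\ge 1$), and conclude with \cref{eq:clf-cond-V-bnd} and \cite[Theorem~4.8]{khalil2002nonlinear-book}; your vertex/convexity bookkeeping for the true $\dot\theta$ is in fact more explicit than the paper's. The one genuine flaw is your fallback argument for $0<\lambda<1$: from $c(\zeta)<0$ you cannot conclude $L_f V + \frac{\partial V}{\partial \theta}\dot\theta<0$ wherever $A(\zeta)\neq 0$ --- on the contrary, $c(\zeta)<0$ means precisely that the drift by itself violates the required decrease and control effort is needed, so shrinking the control authority by $\lambda<1$ can break $\dot V\le-\alpha(\norm{x})$, and that step does not go through. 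Your instinct about the statement is correct, however: the paper's own proof likewise only covers $\lambda\ge 1$ (consistent with the ``infinite gain margin'' discussion preceding the lemma), so the ``$\lambda>0$'' in the statement is not established by the paper either; for the range that is actually provable, your argument matches the paper's.
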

\begin{proof}
     Note that the CLF derivative condition \cref{eq:min-norm-formulation-b} can be rewritten as \cref{eq:min-norm-formulation-linear-form}, repeated below:
     \begin{equation}\label{eq:clf-deriv-cond-linear-form-2}
        A(\zeta)^T u \leq c(\zeta) ,
    \end{equation}
  Now consider the condition
    \begin{equation}\label{eq:clf-deriv-cond-linear-form-perturb}
        A(\zeta)^T \lambda u \leq c(\zeta) ,
    \end{equation}
  If $c(\zeta)\geq 0$, from \cref{eq:u-zeta-solution}, we know that the solution for \cref{eq:min-norm-formulation} is $u^* = 0$, which obviously also satisfy \cref{eq:clf-deriv-cond-linear-form-perturb}. \\  
  If $c(\zeta)< 0$, then $A(\zeta)^T u \leq c(\zeta)<0$, which indicates that $A(\zeta)^T \lambda u \leq A(\zeta)^T u \leq c(\zeta)<0$ for all $\lambda\geq 1$.  
  Notice that \cref{eq:clf-deriv-cond-linear-form-perturb} is an equivalent reformulation of 
  \begin{equation}\label{eq:clf-deriv-cond-perturb}
      L_f V \!+\! L_g V (\lambda u)\! + \! \frac{\partial V}{\partial \theta }v \!\leq\! -\alpha(\norm{x})
  \end{equation}
Additionally, \cref{eq:clf-cond-V-bnd} indicates that \cref{eq:lyapunov-nonauto-2}. By applying Lyapunov stability theory for non-autonomous theorem (\cite[Theorem~4.8]{khalil2002nonlinear-book}) to \cref{eq:lyapunov-nonauto-1,eq:lyapunov-nonauto-2}, we conclude that the perturbed system under the control law in \cref{eq:min-norm-formulation} is uniformly asymptotically stable.   
\end{proof}

\subsection{Local analysis and PD region of stabilization} \label{sec:pd-clf-local-analysis}
We now consider local stabilizability ensured by a PD-CLF. 

\begin{definition}\label{def:pd-clf-local}
  A continuously differentiable function $V(x,\theta)$ is a local PD-CLF for \cref{eq:dynamics}  if there exist continuous positive definite functions $\alpha_1$, $\alpha_2$, and  $\alpha$  such that the following conditions hold:  

\begin{subequations}\label{eq:clf-cond-local}
    \vspace{-4mm}
    \begin{align}
     &   \hspace{-5mm}  \alpha_1(\norm{x}) \leq V(x,\theta)  \leq \alpha_2(\norm{x}),~ \forall (x,\theta)\in \mcX \times \Theta, 
     \label{eq:clf-cond-V-bnd-local}   \\
   &  \hspace{-5mm} 
   \forall (x,\theta,\thetadot)\in \mcX \times \Theta\times \mcV, \quad  \nonumber \\     
  &  \hspace{3mm} \inf_u \underbrace{ \frac{\partial V}{\partial x}f + \frac{\partial V}{\partial x}gu + \frac{\partial V}{\partial \theta }\thetadot}_{\trieq \dot V(x,\theta,\thetadot,u)} \leq -\alpha(\norm{x}),\label{eq:clf-cond-derivative-local} 
       \vspace{-8mm}
    \end{align}
\end{subequations} 
where $\mcX\subset\mbR^n$ is a compact set containing the origin in its interior. 
\end{definition}

\begin{definition}\label{def:pd-ros}
A PD region of stabilization (PD-ROS) for system \cref{eq:dynamics} is the set of initial states $x(0)$ and parameters $\theta(0)$ from which there exists a control law $K(x,\theta)$ such that $u(t)=K(x(t),\ \forall t\geq 0$, and 
$\lim_{t\rightarrow 0} x(t) = 0$ 
for all admissible trajectories of $\theta$. Formally, a PD-ROS is a set defined by \\
$
\left\{(x(0),\theta(0)) \in \mbR^{n}\times \Theta\!:  \ \forall \theta \in \fThetaV,  \exists u(t)= K(x(t),\theta(t))\  \forall t\geq 0,   
\textup{ s.t.}  \lim_{t\rightarrow 0} x(t) = 0.
\right\}
$

\end{definition}

\begin{definition}\label{def:r-ros}
A robust region of stabilization (R-ROS) for system \cref{eq:dynamics} is the set of initial states $x(0)$  from which there exists a control law $K(x,\theta)$ such that $u(t) = K(x(t),\theta(t))$ $\forall t\geq 0$, and $\lim_{t\rightarrow 0} x(t) = 0$ for all admissible trajectories of $\theta$. Formally, an R-ROS is a set defined by \\
$\left\{x(0) \in \!\mbR^{n}\!\!:  \ \forall \theta \in \fThetaV,\  \exists u(t)= K(x(t),\theta(t))\  \forall t\geq 0, 
\textup{ s.t.}  \lim_{t\rightarrow 0} x(t) = 0.
\right\}$

\end{definition}
\begin{remark}
    Compared to R-ROS, PD-ROS allows us to enlarge the set of states from which the system can be stabilized by explicitly considering $\theta(0)$, the initial value of $\theta$. This is demonstrated by \cref{fig:roa-pendulum} to be presented in \cref{sec:sim}. 
\end{remark}

For future use, we  define
   \begin{equation}\label{eq:Omega-rho-defn}
  \Omega(V,\rho)\! = \!\left \{(x,\theta)\in\mbRn \!\times\! \mbR^\ntheta: V(x,\theta)\leq \rho, \theta\in \Theta \right\},
\end{equation} 
where $\rho$ is a positive constant.

\begin{lemma}\label{lem:omega-is-pd-ros}
   Supposing $V(x,\theta)$ is a local PD-CLF for \cref{eq:dynamics} as defined in \cref{def:pd-clf-local},  if $\rho$ is selected such that $\Omega(V,\rho)\subseteq \mcX\times\Theta$, then $\Omega(V,\rho)$ is a PD-ROS. 
\end{lemma}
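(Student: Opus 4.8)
The plan is to certify $\Omega(V,\rho)$ as a PD-ROS by exhibiting a single feedback law — the min-norm control law $u^\ast(x,\theta)$ from \cref{eq:min-norm-formulation} — and showing that every closed-loop trajectory started in $\Omega(V,\rho)$ stays in $\Omega(V,\rho)$ and converges to the origin, for every admissible $\theta\in\fThetaV$. First I would dispose of well-posedness. Because $V$ is a local PD-CLF, the vertex reformulation of \cref{eq:clf-cond-derivative-local} — obtained exactly as \cref{eq:clf-cond-derivative-sufficient} is obtained from \cref{eq:clf-cond-derivative}, using that $\dot V$ is affine in $\dot\theta$ and $\mcV$ is a hyper-rectangle, so the supremum over $v\in\mcV$ is attained at a vertex — makes the constraint \cref{eq:min-norm-formulation-b} feasible at every $(x,\theta)\in\mcX\times\Theta$, so $u^\ast$ is well defined there; by \cref{lemma:qp-control-local-lipschitz} it is locally Lipschitz, and hence for any fixed admissible $\theta(\cdot)$ the closed-loop ODE $\dot x = f(x,\theta(t)) + g(x,\theta(t))u^\ast(x,\theta(t))$ has a unique absolutely continuous solution on a maximal interval.

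Next I would prove positive invariance of $\Omega(V,\rho)$. Fix $(x(0),\theta(0))\in\Omega(V,\rho)$ and an admissible $\theta(\cdot)$. On any subinterval where $x(t)\in\mcX$ we have $(x(t),\theta(t))\in\mcX\times\Theta$ (since $\theta(t)\in\Theta$ for all $t$ by \cref{assump:theta-thetadot-range}), so $u^\ast(x(t),\theta(t))$ satisfies \cref{eq:min-norm-formulation-b} at every vertex $v\in\ver{\mcV}$; as $\dot V$ is affine in $\dot\theta$ and $\dot\theta(t)\in\mcV=\textup{conv}(\ver{\mcV})$ for a.e.\ $t$, the same inequality holds with $v$ replaced by $\dot\theta(t)$, giving $\dot V(x(t),\theta(t),\dot\theta(t),u^\ast)\le-\alpha(\norm{x(t)})\le0$ a.e. Hence $t\mapsto V(x(t),\theta(t))$ is nonincreasing, so $V(x(t),\theta(t))\le V(x(0),\theta(0))\le\rho$, i.e.\ $(x(t),\theta(t))\in\Omega(V,\rho)\subseteq\mcX\times\Theta$. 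A standard continuity argument then rules out $x(t)$ ever reaching $\partial\mcX$ (at a putative first exit time the state would still satisfy $V\le\rho$ and thus remain in $\Omega(V,\rho)\subseteq\mcX\times\Theta$), so $x(t)$ stays in the compact set $\mcX$ for all time in the maximal interval; in particular there is no finite escape time and the solution exists on $[0,\infty)$.

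Finally, convergence. The scalar function $t\mapsto V(x(t),\theta(t))$ is nonincreasing and bounded below by $0$, hence converges to some $V_\infty\ge0$. If $V_\infty>0$, then \cref{eq:clf-cond-V-bnd-local} forces $\alpha_2(\norm{x(t)})\ge V_\infty$, so $\norm{x(t)}$ is bounded away from $0$; together with $x(t)\in\mcX$ this confines $\norm{x(t)}$ to a compact set on which the continuous positive function $\alpha$ has a positive lower bound, making $\dot V$ bounded away from $0$ a.e.\ and contradicting that $V$ is bounded below. Hence $V_\infty=0$, and then $\alpha_1(\norm{x(t)})\le V(x(t),\theta(t))\to0$ with $\alpha_1$ positive definite yields $\norm{x(t)}\to0$; equivalently one may invoke the non-autonomous Lyapunov asymptotic-stability theorem (the local form of \cite[Theorem~4.8]{khalil2002nonlinear-book}) on the positively invariant neighborhood $\Omega(V,\rho)$. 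Since this holds for every admissible $\theta\in\fThetaV$, $\Omega(V,\rho)$ satisfies \cref{def:pd-ros}. The step I expect to be the main obstacle is the invariance argument: making rigorous that the vertex-only constraint in \cref{eq:min-norm-formulation-b} enforces $\dot V\le-\alpha(\norm{x})$ along the true trajectory, whose $\dot\theta(t)$ is only known to lie in $\mcV$, and the boundary bookkeeping at $\partial\mcX$ needed to keep the sublevel set trapped.
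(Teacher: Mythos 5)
Your proposal is correct and follows essentially the same skeleton as the paper's proof: the CLF decrease condition keeps $V$ nonincreasing along closed-loop trajectories, hence $\Omega(V,\rho)$ is forward invariant and contained in $\mcX\times\Theta$ where the decrease condition is valid, and convergence then follows from the non-autonomous Lyapunov theorem (\cite[Theorem~4.8]{khalil2002nonlinear-book}) or, as you do, from a direct monotone-$V$ argument. Where you differ is in making the argument constructive: the paper merely asserts ``there exist $u(t)$'' achieving $\dot V\le-\alpha(\norm{x})$, whereas you exhibit the min-norm feedback \cref{eq:min-norm-formulation} with the vertex reformulation and invoke \cref{lemma:qp-control-local-lipschitz} for well-posedness; this is closer to what \cref{def:pd-ros} literally asks for (a feedback law $K(x,\theta)$), but note that citing \cref{lemma:qp-control-local-lipschitz} implicitly imports its hypotheses that $\partial V/\partial x$ and $\alpha$ are locally Lipschitz, which \cref{lem:omega-is-pd-ros} does not assume --- for the purposes of this lemma mere continuity of the feedback (Carath\'eodory/Peano existence, no uniqueness needed) would suffice, or you can follow the paper and avoid fixing a particular selection altogether. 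The boundary issue you flag (a trajectory grazing $\partial\mcX$ when $\Omega(V,\rho)$ is only assumed contained in, not interior to, $\mcX\times\Theta$) is real but is equally present and unaddressed in the paper's one-line invariance claim, so it is not a gap introduced by your argument; it is resolved by assuming $\Omega(V,\rho)$ lies in the interior of $\mcX\times\Theta$ or that the CLF conditions hold on a slightly larger open set.
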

\begin{proof}
    Condition \cref{eq:clf-cond-derivative-local} ensures that  if $(x(0),\theta(0))\in \Omega(V,\rho)$, then there exist $u(t)$ such that 
    \begin{equation}
         \dot V(t)\leq -\alpha( \norm{x(t)}), \ \forall t\geq 0,
    \end{equation}
   $\forall \theta \in\! \fThetaV$. As a result, $V(x(t),\theta(t))\leq V(x(0),\theta(0)\leq \rho$, $\forall t\geq 0$, which implies $x(t)\in \mcX$, $\forall t\geq 0$. By applying \cite[Theorem~4.8]{khalil2002nonlinear-book}, one can immediately conclude that the origin can be uniformly asympotically stabilized, and thus $\lim_{t\rightarrow 0} x(t) \!=\! 0$. Therefore, $\Omega(V,\rho)$ is a PD-ROS.
\end{proof}

\section{Convex synthesis of PD-CLFs Using SOS Programming}
In this section, we consider the system in \cref{eq:dynamics} to have a polynomial form, i.e., $f(x,\theta)$ and $g(x,\theta)$ are polynomial functions. Furthermore, we assume that the nonlinear polynomial system can be rewritten in  
    a linear-like form \cite{prajna2004nonlinear-sos}:
\begin{equation}\label{eq:dynamics-linear-form}
    \dot x = A(x,\theta) x + B(x,\theta) u,
\end{equation}
where $A(x,\theta)\in \mbR^{n\times n}$ is a polynomial matrix, $B(x,\theta) = g(x,\theta)$. 
\begin{remark}
Note that a polynomial system \cref{eq:dynamics} can always be written in the form of \cref{eq:dynamics} if $f(x,\theta)$ does not contain terms independent of $x$, i.e., monomials of zero degrees in $x$. When $f(x,\theta)$ contains $x$-independent terms, state transformation may be leveraged to obtain a linear-like form, {as we will demonstrate in \cref{sec:rocket}}. 
\end{remark}  

We consider local synthesis, i.e., assuming $x\in\mcX$, where $\mcX$ is a set defined by 
\begin{align}
{\mcX} &\trieq  \{x\in\mbRn: |{C}_i(x)x)|\leq 1,\ i\in\mbZ_1^{n}\},  \label{eq:X-ubar-defn}\\
& = \{x\in\mbRn\!: c_i(x)\geq 0,\ i\in\mbZ_1^{n}\}, \label{eq:X-set-defn-npv}
\vspace{2mm}
\end{align}
where $C_i:\mbR^n\rightarrow\mbR^{1\times n}$ is a vector-valued polynomial function, and $c_i(x) = 1-x^TC_i^TC_i x$ is a polynomial function.
\subsection{Preliminaries about SOS optimization}
A polynomial $l(x)$ is an SOS if there exist polynomials $l_1(x),\dots,l_m(x)$
 such that $l(x) = \sum_{i=1}^{m}l_i^2(x)$. A polynomial $l(x)$ of degree $2d$ is an SOS iff there exists a {positive semidefinite (PSD)} matrix $Q$ such that $l(x)= y^T(x)Q  y(x)$, where $y(x)$ is a column vector whose entries are all monomials in $x$ with degree up to $d$ \cite{parrilo2000structured-sos}. 
 An SOS decomposition for a given $l(x)$ can be computed using semidefinite programming (SDP) (by searching for a PSD matrix $Q$) \cite{prajna2004nonlinear-sos}. 
 Hereafter, we use $\sos{x}$ to denote the set of SOS polynomials in $x$ and $\sos{x,y}$ to denote the set of SOS polynomials in $x$ and $y$.  


 \begin{proposition} \label{proposition:s-procedure} (S-procedure)  \cite{parrilo2000structured-sos}
 Let $p(x)$ be a a polynomial and  let $\mcX$ be a set defined in \cref{eq:X-set-defn-npv}. Suppose there exist SOS polynomials $\lambda_i(x)$, $i=0,1,\dots,n$ such that 
 $\left(1+\lambda_0(x)\right)p(x)- \sum_{i=1}^{n} \lambda_i(x) c_i(x) \in \sos{x}$. Then, $p(x)\geq 0 \textup{ for all } x\in\mcX$. 
 \end{proposition}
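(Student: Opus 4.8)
The plan is to prove the pointwise inequality $p(x)\geq 0$ on $\mcX$ by a direct evaluation argument: fix an arbitrary $x\in\mcX$ and show that the hypothesized SOS certificate forces nonnegativity of $p$ at that point.

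First I would record the elementary consequences of the SOS hypotheses, evaluated at a fixed point $x\in\mbR^n$. Since each $\lambda_i(x)$, $i=0,1,\dots,n$, is a sum of squares, we have $\lambda_i(x)\geq 0$ for every $x$; in particular $1+\lambda_0(x)\geq 1>0$, so this multiplier is uniformly bounded away from zero. Likewise, writing $s(x)\trieq (1+\lambda_0(x))p(x)-\sum_{i=1}^{n}\lambda_i(x)c_i(x)$, the assumption $s\in\sos{x}$ gives $s(x)\geq 0$ for all $x$.

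Next I would exploit membership in $\mcX$. By the description in \cref{eq:X-set-defn-npv}, any $x\in\mcX$ satisfies $c_i(x)\geq 0$ for all $i\in\mbZ_1^{n}$; combined with $\lambda_i(x)\geq 0$ this yields $\sum_{i=1}^{n}\lambda_i(x)c_i(x)\geq 0$. Rearranging the definition of $s$ then gives $(1+\lambda_0(x))\,p(x)=s(x)+\sum_{i=1}^{n}\lambda_i(x)c_i(x)\geq 0$, and dividing through by the strictly positive quantity $1+\lambda_0(x)$ yields $p(x)\geq 0$. Since $x\in\mcX$ was arbitrary, the claim follows.

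There is essentially no hard step here: the result is a one-line consequence of nonnegativity of squares together with the sign conditions defining $\mcX$. The only point deserving a word of care is the role of the multiplier $1+\lambda_0(x)$ — it must be bounded away from zero (which is precisely why the certificate is written with the ``$1+$'' rather than a bare SOS factor), since this is what makes the final division legitimate and the sign conclusion non-vacuous.
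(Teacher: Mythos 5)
Your argument is correct, and it is the standard sufficiency proof of this SOS S-procedure certificate: pointwise, the SOS hypotheses give $s(x)\geq 0$ and $\lambda_i(x)\geq 0$, membership in $\mcX$ gives $c_i(x)\geq 0$, hence $(1+\lambda_0(x))p(x)\geq 0$, and division by $1+\lambda_0(x)\geq 1$ finishes it. The paper itself offers no proof of \cref{proposition:s-procedure} (it is cited from \cite{parrilo2000structured-sos}), so there is nothing to diverge from; your pointwise evaluation argument is exactly what the cited result rests on, and your side remark about the ``$1+$'' correctly identifies the only place where care is needed in this (easy) direction of the implication.
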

\begin{remark}
    In practice, $\lambda_0(x)$ is often selected to be 0.
\end{remark}

 \begin{proposition}\cite[Proposition~2]{prajna2004nonlinear-sos}\label{proposition:sos-psd}
 Let $F(x)$ be an $N\!\times\! N$ symmetric polynomial matrix of degree $2d$ in $x\!\in\!\mbR^n$. Then, $F(x)\!\geq \!0 
    \textup{ } \forall x\in\mbR^n$, if $v^T\!F(x)v\!\in\! \sos{x,v} , \textup{ where } v\!\in\! \mbR^N$. 
 \end{proposition}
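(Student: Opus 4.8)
The plan is to derive the claim directly from the definition of a sum-of-squares polynomial via a pointwise evaluation argument. Assume the hypothesis $v^T F(x) v \in \sos{x,v}$. By the definition of SOS recalled just above, there then exist finitely many polynomials $p_1,\dots,p_k\in\mbR[x,v]$ such that the polynomial identity $v^T F(x) v = \sum_{j=1}^{k} p_j(x,v)^2$ holds in $\mbR[x,v]$.

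The next step is to evaluate this identity at arbitrary real arguments. Fix any $x_0\in\mbRn$ and any $v_0\in\mbR^N$. Since substituting these real values for the indeterminates $x$ and $v$ preserves the identity, we obtain $v_0^T F(x_0) v_0 = \sum_{j=1}^{k} p_j(x_0,v_0)^2 \geq 0$, because each term on the right is the square of a real number. As $v_0\in\mbR^N$ was arbitrary and $F(x_0)$ is symmetric (being a symmetric polynomial matrix evaluated at a point), the quadratic form $v\mapsto v^T F(x_0) v$ is nonnegative on all of $\mbR^N$, which is precisely the statement $F(x_0)\geq 0$. Finally, since $x_0\in\mbRn$ was also arbitrary, we conclude $F(x)\geq 0$ for all $x\in\mbRn$.

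There is essentially no technical obstacle here: the result is an immediate consequence of the fact that a sum of real squares is nonnegative, combined with the characterization of positive semidefiniteness through nonnegativity of the associated quadratic form. The only point worth emphasizing is that the stated condition is merely sufficient --- a polynomial matrix can be positive semidefinite for all $x$ without $v^T F(x) v$ admitting an SOS decomposition --- but it is computationally attractive, since searching for the polynomials $p_j$ (equivalently, for a Gram matrix certifying that $v^T F(x) v$ is SOS) is a semidefinite program. It thereby converts the otherwise intractable robust condition ``$F(x)\geq 0$ for all $x$'' into a convex feasibility problem, in the same spirit as the scalar S-procedure stated above.
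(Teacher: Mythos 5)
Your proof is correct: the pointwise-evaluation argument from the SOS decomposition of $v^T F(x) v$ is the standard (and essentially only) way to establish this sufficiency direction, and it matches the reasoning behind the result the paper simply cites from \cite{prajna2004nonlinear-sos} without reproving. Your closing remark that the condition is sufficient but not necessary, while tractable via semidefinite programming, is also accurate.
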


 \begin{proposition}\cite[Proposition~10]{prajna2004nonlinear-sos}\label{proposition:sos-psd-local}
 Let $F(x)$ be an $N\times n$ symmetric polynomial matrix of degree $2d$ in $x\in\mbR^n$, $\mcX$ is a semialgebraic set defined in \cref{eq:X-set-defn-npv}, and  
 $v\in \mbR^N$. Suppose there exist SOS polynomials $\lambda_i(x,v)$, $i=1,\dots,n$ 
 such that 
 $v^TF(x)v - \sum_{i=1}^n \lambda_i(x,v) c_i(x) \in \sos{x,v}$. Then, $F(x)\geq 0 \textup{ for all } x\in\mcX$. 
 \end{proposition}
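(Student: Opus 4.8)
The plan is to derive this proposition as the matrix/vectorized analogue of the S-procedure (\cref{proposition:s-procedure}), applied in the \emph{joint} variables $(x,v)$. Concretely, I would fix an arbitrary $x_0\in\mcX$; it then suffices to prove the scalar inequality $v^TF(x_0)v\geq 0$ for every $v\in\mbR^N$, since this is exactly the statement $F(x_0)\geq 0$ — the same passage from a quadratic form to a matrix inequality that underlies \cref{proposition:sos-psd}.

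The key step is to invoke the hypothesis. Writing $\sigma(x,v)\trieq v^TF(x)v-\sum_{i=1}^n\lambda_i(x,v)c_i(x)$, the assumption $\sigma\in\sos{x,v}$ means $\sigma$ is a finite sum of squares of polynomials in $(x,v)$, hence $\sigma(x,v)\geq 0$ for all $(x,v)\in\mbR^n\times\mbR^N$. Evaluating at $x=x_0$ and rearranging gives
\[
v^TF(x_0)v\ \geq\ \sum_{i=1}^n\lambda_i(x_0,v)\,c_i(x_0),\qquad\forall\,v\in\mbR^N.
\]
It then remains to show the right-hand side is nonnegative: each $\lambda_i$ is SOS, so $\lambda_i(x_0,v)\geq 0$; and $x_0\in\mcX$ forces $c_i(x_0)\geq 0$ by the definition in \cref{eq:X-set-defn-npv}. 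Thus every summand, and hence the whole sum, is nonnegative, so $v^TF(x_0)v\geq 0$ for all $v$, i.e., $F(x_0)\geq 0$. Since $x_0\in\mcX$ was arbitrary, $F(x)\geq 0$ for all $x\in\mcX$.

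I do not expect a genuine obstacle here; the result is essentially \cref{proposition:s-procedure} with $\lambda_0\equiv 0$ and with $(x,v)$ playing the role of the decision variable — the only mild subtlety being the bookkeeping that the constraint polynomials $c_i$ depend on $x$ only (which is harmless) and that SOS-ness in the pair $(x,v)$ still yields pointwise nonnegativity once $x$ is frozen. As in \cref{proposition:s-procedure}, one could additionally carry a multiplier $(1+\lambda_0(x,v))$ in front of $v^TF(x)v$ without affecting the conclusion, at the cost of bilinearity in the decision variables; it is worth a short remark that this is usually avoided in practice.
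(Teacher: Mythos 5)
Your proof is correct: freezing $x_0\in\mcX$, using pointwise nonnegativity of the SOS certificate, and observing that $\lambda_i(x_0,v)\geq 0$ and $c_i(x_0)\geq 0$ make the subtracted terms nonnegative is exactly the standard S-procedure argument in the joint variables $(x,v)$, which is how this result is established in the cited reference \cite{prajna2004nonlinear-sos}; the paper itself offers no proof, simply citing that source, so there is nothing to contrast. The only cosmetic point is that the statement's ``$N\times n$'' should read ``$N\times N$'' (as your argument implicitly assumes, since $v^TF(x)v$ requires a square $F$).
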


\subsection{Convex Synthesis of a PD-CLF and a PD Controller}
We present an SOS optimization-based method for the joint synthesis of a PD-CLF and a PD controller. The method is inspired by \cite{prajna2004nonlinear-sos,maier2010predictive-sos,zhao2023convex-cbf}.

Let $A_j(x,\theta)$ denote the $j$-th row of $A(x,\theta)$, $J=\left\{j_1,j_2,\dots, j_m\right\}$ denote the row indices of $g(x,\theta)$ whose corresponding row is equal to zero, and define $$\tilde x =(x_{j_1}, x_{j_2},\dots, x_{j_m}),$$ {which includes all the states whose derivatives are not directly affected by control inputs.}

The result is summarized in the following theorem.
\begin{theorem}\label{them:npv-synthesis-sos}
Consider an NPV system \cref{eq:dynamics-linear-form} with the parameters $\theta$ satisfying \cref{eq:theta-thetadot-constrs}, 
and the set $\mcX$ defined in \cref{eq:X-set-defn-npv}. Suppose there exists polynomial matrices $X(\tilde x, \theta)\in \mcS^n$ and $Y(x,\theta)\in \mbR^{m\times n}$, constants $\varepsilon_i>0$ ($i=1,2,3$), and SOS polynomials 
    $\lambda_i^l$ ($i\in\mbz{\Ntheta}, l\in\mbz{4}$), $\alpha_i^j ~(i\in\mbz{\Ntheta},j\in\mbz{n})$,  
    $\mu_j^p$ ($j\in\mbz{n}, p\in\mbz{3}$)
    and $\xi_k$ ($k\in\mbz{\ntheta}$)     
    such that 
    \begin{subequations}\label{eq:joint-syn-sos}
    \begin{align}
& v_1^T\left(X(\tilde x,\theta)-\varepsilon_1 I\right) v_1 - \sum_{i=1}^\Ntheta \lambda_i^1 h_i(\theta) -\! \sum_{j=1}^{n} \mu_j^1 c_j(x)  \in \sos{x,\theta,v_1}
\label{eq:joint-syn-sos-1} \\
& v_1^T\left(\varepsilon_2 I - X(\tilde x,\theta)\right) v_1 - \sum_{i=1}^\Ntheta \lambda_i^2 h_i(\theta)  -\! \sum_{j=1}^{n} \mu_j^2 c_j(x)  \in \sos{ x,\theta,v_1} \label{eq:joint-syn-sos-2}\\
  &  - v_1^T\!\left( F_1(x,\theta,\thetadot)
   +\varepsilon_3I \right) v_1 - 
   \sum_{i=1}^\Ntheta \lambda_i^3 h_i(\theta)  -\! \sum_{j=1}^{n} \mu_j^3 c_j(x) - \!\sum_{k=1}^\ntheta \xi_k (\dot \theta_k\! -\! \ubar v_k)  (\bar v_k\!-\!\dot \theta_k) \in \sos{x,\theta,\thetadot,v_1}, \label{eq:joint-syn-sos-3} \\
& 
{\setlength{\arraycolsep}{2pt}
v_2^T \!\begin{bmatrix}
1 & {C_j} X(\tilx,\theta)\\
* & X(\tilx,\theta)    \end{bmatrix}}v_2\!- \!\sum_{i=1}^\Ntheta \alpha_i^j h_i(\theta) 
\!\in\! \sos{x,\theta,v_2},\ \forall j\in \mbZ_1^{n}\!, \label{eq:joint-syn-sos-state-cst}
\end{align}  
\end{subequations}
where 
\begin{equation}\label{eq:F1-defn}
    F_1(x,\theta,\thetadot) = \sym{AX\!+\!BY}-\! \sum\limits_{j \in J} \frac{\partial X}{\partial x_j} \left( {{A_j}x}\right) \!- \!\sum_{k=1}^\ntheta \frac{\partial X}{\partial \theta_k} \dot \theta_k, 
\end{equation} and $v_1,~v_2,$ and $v_3$ are vectors of proper dimensions. 
Then,
\begin{enumerate}[label=(\alph*) ]
    \item the controller 
\begin{equation}\label{eq:control-law-Y-X}
     u(x,\theta) = Y(x,\theta)X^{-1}(\tilde x,\theta)x
\end{equation}
renders the closed-loop system exponentially stable (GES) 
for all admissible trajectories of $\theta$ satisfying \cref{eq:theta-thetadot-constrs}. Moreover, 
\begin{equation}\label{eq:V-defn-w-X}
    V(x,\theta) = x^T X^{-1}(\tilde x,\theta)x
\end{equation}
is a PD Lyapunov function to certify such stability. 
\item The set $\Omega(V,1) = \{(x,\theta)\in\mbR^{n} \times \mbR^{\ntheta}: V(x,\theta)\leq 1, \theta\in \Theta\}$ is a subset of a PD region of attraction, i.e., $\forall(x(0),\theta(0))\in \Omega(V,1)$,  $\lim_{t\rightarrow 0} x(t) = 0$ for all admissible trajectories of $\theta$ satisfying \cref{eq:theta-thetadot-constrs}.  
\end{enumerate}
\end{theorem}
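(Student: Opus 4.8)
The plan is to run the standard congruence-transformation argument for ``linear-like'' SOS synthesis, taking $V(x,\theta)=x^{T}X^{-1}(\tilx,\theta)x$ from \cref{eq:V-defn-w-X} as the Lyapunov candidate and $K(x,\theta)=Y(x,\theta)X^{-1}(\tilx,\theta)$ as the closed-loop gain in \cref{eq:control-law-Y-X}. \textbf{Step 1 (SOS certificates $\Rightarrow$ pointwise matrix inequalities).} Applying the matrix S-procedure (\cref{proposition:sos-psd-local}, with the defining polynomials $\{c_{j}(x)\}\cup\{h_{i}(\theta)\}$ of $\mcX\times\Theta$) to \cref{eq:joint-syn-sos-1,eq:joint-syn-sos-2} gives $\varepsilon_{1}I\preceq X(\tilx,\theta)\preceq\varepsilon_{2}I$ on $\mcX\times\Theta$; hence $X$ is invertible there, $\varepsilon_{2}^{-1}I\preceq X^{-1}\preceq\varepsilon_{1}^{-1}I$, and $\varepsilon_{2}^{-1}\norm{x}^{2}\le V(x,\theta)\le\varepsilon_{1}^{-1}\norm{x}^{2}$. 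Since $\mcV=\{v:(v_{k}-\ubar v_{k})(\bar v_{k}-v_{k})\ge0,\ k\in\mbz{\ntheta}\}$, the same reasoning applied to \cref{eq:joint-syn-sos-3} --- now also over $\mcV$ through the multipliers $\xi_{k}$ --- yields $F_{1}(x,\theta,\thetadot)\preceq-\varepsilon_{3}I$ for all $(x,\theta,\thetadot)\in\mcX\times\Theta\times\mcV$, with $F_{1}$ as in \cref{eq:F1-defn}. Finally \cref{eq:joint-syn-sos-state-cst}, which carries only $\Theta$-multipliers, together with \cref{proposition:s-procedure} makes the block $\begin{bmatrix}1 & C_{j}(x)X\\ * & X\end{bmatrix}$ positive semidefinite for all $x\in\mbRn$, $\theta\in\Theta$, $j\in\mbz{n}$.

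\textbf{Step 2 (derivative of $V$ along the closed loop).} The structural point is that for each $j\in J$ the $j$-th row of $g$ vanishes, so $\dot x_{j}=A_{j}(x,\theta)x$ contains no $u$; therefore $\dot X=\sum_{j\in J}\frac{\partial X}{\partial x_{j}}(A_{j}x)+\sum_{k=1}^{\ntheta}\frac{\partial X}{\partial\theta_{k}}\thetadot_{k}$ is independent of $u$ --- which is precisely why $X$ is permitted to depend on $\tilx$ and $\theta$ only. Using $\tfrac{d}{dt}X^{-1}=-X^{-1}\dot X X^{-1}$, the closed loop $\dot x=(A+BK)x$, and the congruence $z\trieq X^{-1}x$ (so $x=Xz$), a direct computation gives $\dot V=z^{T}\big(\sym{(A+BK)X}-\dot X\big)z=z^{T}\big(\sym{AX+BY}-\dot X\big)z=z^{T}F_{1}(x,\theta,\thetadot)z$, using $(A+BK)X=AX+BY$.

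\textbf{Step 3 (stability and the invariant sublevel set).} Combining Step 1 with \cref{assump:theta-thetadot-range} (which guarantees $\theta(t)\in\Theta$, $\thetadot(t)\in\mcV$), along any trajectory with $x(t)\in\mcX$ one has $\dot V\le-\varepsilon_{3}\norm{z}^{2}\le-\varepsilon_{3}\varepsilon_{2}^{-2}\norm{x}^{2}$. To localize, evaluating the positive-semidefinite quadratic form of Step 1 at $\big(C_{j}(x)x,\,-X^{-1}(\tilx,\theta)x\big)$ (equivalently, a Schur-complement estimate) yields $\big(C_{j}(x)x\big)^{2}\le x^{T}X^{-1}x=V(x,\theta)$ for every $j$; hence $V(x,\theta)\le1$ and $\theta\in\Theta$ imply $c_{j}(x)\ge0$ for all $j$, i.e. $\Omega(V,1)\subseteq\mcX\times\Theta$. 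Now for $(x(0),\theta(0))\in\Omega(V,1)$ we have $x(0)\in\mcX$, so $\dot V(0)\le0$ and $V$ is nonincreasing; thus $V\le1$ always, whence $x(t)\in\mcX$ for all $t\ge0$, and then $\dot V\le-\varepsilon_{3}\varepsilon_{2}^{-2}\norm{x}^{2}\le-\varepsilon_{1}\varepsilon_{3}\varepsilon_{2}^{-2}V$ on the whole trajectory, giving $V(x(t),\theta(t))\le V(x(0),\theta(0))\,e^{-\varepsilon_{1}\varepsilon_{3}\varepsilon_{2}^{-2}t}$ and $\norm{x(t)}^{2}\le\varepsilon_{2}V(x(t),\theta(t))\to0$ exponentially, for every admissible $\theta$. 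This delivers (b) --- $\Omega(V,1)$ sits inside a PD region of attraction in the sense of \cref{def:pd-ros} --- and, together with the quadratic sandwich bounds, the exponential-stability certification of (a).

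\textbf{Main obstacle.} The one non-routine step is Step 2: correctly applying the chain rule to $X^{-1}(\tilx,\theta)$ --- in particular recognizing that $\dot X$ gathers exactly the $u$-free derivatives $\dot x_{j}$ ($j\in J$) and $\thetadot_{k}$ --- and then carrying out the congruence by $X^{-1}$ so as to arrive at the $Y$-affine matrix $F_{1}$. Everything else is bookkeeping: being careful to invoke the S-procedure inequalities only at the realized $(x(t),\theta(t),\thetadot(t))$ (legitimate because \cref{assump:theta-thetadot-range} confines $(\theta,\thetadot)$ to $\Theta\times\mcV$ and the sublevel-set argument confines $x$ to $\mcX$), and handling the $x$-dependence of $C_{j}$ pointwise in the localization estimate.
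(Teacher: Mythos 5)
Your proposal is correct and follows essentially the same route as the paper's proof: Proposition~\ref{proposition:sos-psd-local} converts \cref{eq:joint-syn-sos-1,eq:joint-syn-sos-2,eq:joint-syn-sos-3,eq:joint-syn-sos-state-cst} into the pointwise bounds $\varepsilon_1 I\preceq X\preceq\varepsilon_2 I$ and $F_1\preceq-\varepsilon_3 I$, the same chain-rule/congruence computation gives $\dot V=x^TX^{-1}F_1X^{-1}x$, and the same Schur-complement estimate $|C_jx|^2\le V$ yields invariance of $\Omega(V,1)$ and the exponential decay $\dot V\le-\varepsilon_1\varepsilon_3\varepsilon_2^{-2}V$. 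The only cosmetic difference is your passing reference to the scalar S-procedure (Proposition~\ref{proposition:s-procedure}) for \cref{eq:joint-syn-sos-state-cst}, where the paper invokes the matrix version, which changes nothing of substance.
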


\begin{proof} 
{\it Part (a)}: 
 According to \cref{proposition:sos-psd-local}, \cref{eq:joint-syn-sos-1,eq:joint-syn-sos-2} ensures  
 \begin{equation}\label{eq:X-bnd-epsilon12}
     \varepsilon_1 I \leq  X(\tilde x,\theta)\leq \varepsilon_2I,\ \forall \theta\in\Theta,\  x\in \mcX,
 \end{equation}
 which implies 
 \begin{equation}\label{eq:joint-syn-V-bnd}
     \frac{1}{\varepsilon_2} \norm{x}^2  \leq V(x,\theta) \leq \frac{1}{\varepsilon_1} \norm{x}^2,\ \forall \theta\in\Theta, x\in \mcX.
 \end{equation}

Under the dynamics \cref{eq:dynamics-linear-form} and the control law \cref{eq:control-law-Y-X}, and the definition of $V$ in \cref{eq:V-defn-w-X}, we have 
{
{
\begin{align}
  & \dot V(x,\theta)  = \sym{\!x^T X^{-1} \dot x\!}  + x^T\!\left [ \sum\limits_{j \in J} \!\frac{\partial{ X^{ - 1}}}{\partial x_j} \dot x_j \!+\! \sum_{k=1}^\ntheta \! \frac{\partial X^{-1}}{\partial \theta_k} \dot \theta_k\! \right ]\!x\nonumber    
  \\
  & = \!{x^T}\!X^{-1} \!\!\left[\sym{ {A X \!+\!{B}{Y}}} -  \!\! 
  \sum\limits_{j \in J}\frac{\partial{ X}}{\partial {x_j}} ( {A_jx} ) -   \!\!\sum_{k=1}^\ntheta \! \frac{\partial X}{\partial \theta_k} \dot \theta_k \! \right]\! X^{-1}x \label{eq:Vdot-expression-1} \\ 
    &= \! {x^T}{X^{ - 1}}F_1(x,\theta,\thetadot){X^{ - 1}}x \label{eq:Vdot-expression-2}
\end{align}}}%
where we have leveraged the fact that $\dot x =\left( {A + BY{X^{ - 1}}} \right)x$ and $\frac{{\partial {X^{ - 1}}}}{{\partial {x_j}}} =  - {X^{ - 1}}\frac{{\partial X}}{{\partial {x_j}}}{X^{ - 1}}$ \cite[Lemma 5]{prajna2004nonlinear-sos} in deriving \cref{eq:Vdot-expression-1}, and $F_1(x,\theta,\thetadot)$ is defined in \cref{eq:F1-defn}. According to \cref{proposition:sos-psd-local}, \cref{eq:joint-syn-sos-3} implies  $F_1(x,\theta,\thetadot)+\varepsilon_3I\leq 0$, $\forall x\in\mcX$, $\theta\in \mcF_\Theta^\mcV$ with $\mcF_\Theta^\mcV$ defined in \cref{eq:F-Theta-mcV-defn}. Further considering \cref{eq:Vdot-expression-2}, we have 
\begin{align}
    \dot V(x,\theta)   &\leq -\varepsilon_3 x^T X^{-1} X^{-1} x \leq -\frac{\varepsilon_3}{\varepsilon_2^2} \norm{x}^2,\  \forall x\in\mcX,\  \theta\in \fThetaV, \label{eq:Vdot-expression-final}
\end{align}
where the last inequality is due to $X^{-1}\ge \frac{1}{\varepsilon_2}I$, $\forall x\in\mcX$, $\theta\in\Theta$ (\cref{eq:X-bnd-epsilon12}).   

By applying Lyapunov stability theory \cite[Theorem~4.10]{khalil2002nonlinear-book} and considering \cref{eq:joint-syn-V-bnd,eq:Vdot-expression-final}, we can conclude that under the controller \cref{eq:control-law-Y-X}, the closed-loop system is locally exponentially stable at the origin, as certified by the Lyapunov function defined in \cref{eq:V-defn-w-X}. 

{\it Part (b)}: According to \cref{proposition:sos-psd-local}, \cref{eq:joint-syn-sos-state-cst} implies ${\footnotesize\setlength{\arraycolsep}{1.9pt} \begin{bmatrix}
1 & {C_i} X\\
* & X
    \end{bmatrix}} \geq 0$,  $\forall \theta \in \Theta$,  which, via Schur complement, further indicates $XC_i^T{C_i}X\leq X$, $\forall \theta \in \Theta$. Multiplying the preceding inequality by $x^TX^{-1}$ and its transpose from the left and right, respectively, leads to 
    $
        |{C_i}x|^2\leq x^TX^{-1}x = V(x,\theta),~\forall i\in \mbZ_1^{n},\ \forall \theta\in \Theta$. 
    Thus, $\forall (x,\theta)\in \Omega(V,1)$, we have  
    $|{C_i}x|^2\leq x^TX^{-1}x\leq 1$, $~\forall i\in \mbZ_1^{n}$, which indicates $x\in\mcX$. Further considering \cref{eq:Vdot-expression-final}, we can conclude that if $ (x(0),\theta(0))\in \Omega(V,1)$, $\theta\in\fThetaV$, then
    \begin{equation}\label{eq:OmegaV-indicates-X} 
     (x(t),\theta(t))\in\Omega(V,1),\  x(t)\in\mcX,\  \forall t\geq 0.
    \end{equation}
     In other words, under the condition that $\theta\in\fThetaV$, $\Omega(V,1)$ is an invariant set, and the projection of $\Omega(V,1)$ onto the $x$-plane is a subset of $\mcX$. 
     As a result, the conditions \cref{eq:Vdot-expression-final,eq:joint-syn-V-bnd} indicate $ \dot V \leq  -\frac{\varepsilon_1\varepsilon_3}{\varepsilon_2^2} V$, which implies $V(t)\leq V(0)\exp^{-\frac{\varepsilon_1\varepsilon_3}{\varepsilon_2^2} t}$, $\forall t\geq 0$. Considering \cref{eq:joint-syn-V-bnd}, we have $\norm{x(t)}^2\leq \varepsilon_2V(t)\leq \varepsilon_2 V(0)\exp^{-\frac{\varepsilon_1\varepsilon_3}{\varepsilon_2^2} t}$, $\forall t\geq 0$, which indicates $\lim_{t\rightarrow 0} x(t) = 0$. 
\end{proof}
\begin{remark}
    \cref{them:npv-synthesis-sos} presents the conditions to synthesize a controller to ensure the exponential stability of the closed-loop system, which is stronger than the uniformly asymptotic stability adopted in \cref{sec:pd-clf,sec:pd-clf-local-analysis}. Additionally, condition \cref{eq:clf-cond} are convex SOS conditions since they are affine in all decision variables. 
\end{remark}
\begin{remark}
    Similar to \cite{prajna2004nonlinear-sos}, the matrix $X$ in \cref{them:npv-synthesis-sos} depends only on $\tilx$, whose time derivatives are not affected by control inputs. This is for ensuring $\dot V$ does not depend on control input $u$ in order to obtain convex SOS conditions. 
\end{remark}

The following lemma states that the Lyapunov function from \cref{them:npv-synthesis-sos} is a  PD-CLF and the associated PD region of attraction is a PD-ROS. The proof is straightforward and omitted. 
\begin{lemma}\label{lemma:LF-is-CLF}
    Suppose the system \cref{eq:dynamics-linear-form} is an equivalent representation of the NPV system \cref{eq:dynamics},
    and all the conditions of \cref{them:npv-synthesis-sos} hold. Then the PD Lyapunov function defined in \cref{eq:V-defn-w-X} is a PD-CLF for \cref{eq:dynamics}, and $\Omega(V,1)$ is a PD-ROS.
\end{lemma}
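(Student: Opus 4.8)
The plan is to verify directly that $V(x,\theta)=x^TX^{-1}(\tilx,\theta)x$ meets the two requirements of \cref{def:pd-clf-local} on $\mcX\times\Theta$, reusing the facts already established inside the proof of \cref{them:npv-synthesis-sos} rather than reproving anything. First I would invoke the bound \cref{eq:X-bnd-epsilon12}, obtained from \cref{eq:joint-syn-sos-1,eq:joint-syn-sos-2} via \cref{proposition:sos-psd-local}: it shows that $X(\tilx,\theta)$ is positive definite, hence invertible, on $\mcX\times\Theta$, so $V$ is well defined and continuously differentiable there. Then \cref{eq:joint-syn-V-bnd} gives $\alpha_1(\norm{x})\leq V(x,\theta)\leq \alpha_2(\norm{x})$ for all $(x,\theta)\in\mcX\times\Theta$ with $\alpha_1(r)=r^2/\varepsilon_2$ and $\alpha_2(r)=r^2/\varepsilon_1$, which are continuous and positive definite. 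This is exactly \cref{eq:clf-cond-V-bnd-local}.

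For \cref{eq:clf-cond-derivative-local}, the idea is that the particular value $u^\circ=Y(x,\theta)X^{-1}(\tilx,\theta)x$ is always an admissible point in the infimum over $u$. Because the rows of $g$ indexed by $J$ are zero (by the definition of $J$) while $X^{-1}$ depends only on $\tilx$, which collects precisely the state components indexed by $J$, substituting $u^\circ$ into $\frac{\partial V}{\partial x}f+\frac{\partial V}{\partial x}gu+\frac{\partial V}{\partial\theta}\thetadot$ reproduces exactly the closed-loop expression $\dot V=x^TX^{-1}F_1(x,\theta,\thetadot)X^{-1}x$ computed in \cref{eq:Vdot-expression-1,eq:Vdot-expression-2}. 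Since \cref{eq:joint-syn-sos-3} implies, again via \cref{proposition:sos-psd-local} with the $\xi_k$ multipliers encoding the box constraint $\thetadot\in\mcV$, that $F_1(x,\theta,\thetadot)+\varepsilon_3 I\leq 0$ on $\mcX\times\Theta\times\mcV$, I would conclude as in \cref{eq:Vdot-expression-final} that
\begin{equation*}
\inf_u\ \dot V(x,\theta,\thetadot,u)\ \leq\ x^TX^{-1}F_1(x,\theta,\thetadot) X^{-1}x\ \leq\ -\tfrac{\varepsilon_3}{\varepsilon_2^2}\norm{x}^2\ =\ -\alpha(\norm{x})
\end{equation*}
for all $(x,\theta,\thetadot)\in\mcX\times\Theta\times\mcV$, with $\alpha(r)=\varepsilon_3 r^2/\varepsilon_2^2$ continuous positive definite. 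Hence $V$ is a local PD-CLF for \cref{eq:dynamics-linear-form} in the sense of \cref{def:pd-clf-local}; if \cref{eq:dynamics-linear-form} arises from \cref{eq:dynamics} through an invertible polynomial state transformation rather than being literally the same system, the conclusion transfers because such a transformation fixes the equilibrium and is a local diffeomorphism, so the pullback of $V$ remains a continuously differentiable local PD-CLF for \cref{eq:dynamics}.

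For the second claim, \cref{them:npv-synthesis-sos}(b) already shows that the projection of $\Omega(V,1)$ onto the $x$-coordinates is contained in $\mcX$, so $\Omega(V,1)\subseteq\mcX\times\Theta$; applying \cref{lem:omega-is-pd-ros} with $\rho=1$ then gives that $\Omega(V,1)$ is a PD-ROS (equivalently, the explicit law \cref{eq:control-law-Y-X} together with the exponential decay bound established in the proof of \cref{them:npv-synthesis-sos}(b) verifies \cref{def:pd-ros} directly). I expect the only mildly delicate step to be the bookkeeping in the second paragraph — checking that evaluating the open-loop Lie derivative $\frac{\partial V}{\partial x}f+\frac{\partial V}{\partial x}gu+\frac{\partial V}{\partial\theta}\thetadot$ at $u=u^\circ$ really returns the closed-loop $\dot V$ used in \cref{them:npv-synthesis-sos}. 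This reduces to observing that $(gu)_j=0$ for $j\in J$ and that $\partial V/\partial x_i=2(X^{-1}x)_i$ for $i\notin J$, after which the argument is just a matter of assembling inequalities proved earlier, which is why the lemma's proof was described as straightforward.
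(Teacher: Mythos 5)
Your proposal is correct and is exactly the argument the paper has in mind when it omits the proof as ``straightforward'': reuse \cref{eq:joint-syn-V-bnd} for \cref{eq:clf-cond-V-bnd-local}, use $u^\circ=Y X^{-1}x$ as a feasible point in the infimum together with $F_1+\varepsilon_3 I\leq 0$ to verify \cref{eq:clf-cond-derivative-local}, and combine $\Omega(V,1)\subseteq\mcX\times\Theta$ (from part (b) of \cref{them:npv-synthesis-sos}) with \cref{lem:omega-is-pd-ros}. One tiny remark: the ``equivalent representation'' in the paper may also involve an input shift (as in the rocket example) rather than a state transformation, but since such a shift is a bijection of the input space the infimum over $u$ is unaffected, so your conclusion still transfers.
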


To maximize the volume of $\Omega(V,1)$, we maximize the volume of an elliptic cylinder $\Omega(x^TX_0^{-1}x,1)=\left \{(x,\theta)\in\mbR^{n}\times \mbR^{\ntheta}|x^TX_0^{-1}x\leq 1,\theta\in\Theta \right\}$ contained inside $\Omega(V,1)$, where $X_0$ is a positive definite matrix. Formally, we solve the following SOS problem:
{
\begin{subequations}\label{eq:opt-max-ellipsoid-joint-syn}
    \begin{align}
    &\max_{
    \footnotesize
    \begin{array}{c}
       X_0>0, \lambda_i^4 (i\in\mbz{N_\theta}), 
       \mu_j^4 (j\in\mbz{n})
    \end{array}
    } \log \det(X_0) \\
     \textup{s.t. } &  v_1^T(X \!- \!X_0)v_1 \!-\! \!\sum_{j=1}^{n} \mu_j^4 c_j(x) \!-  \!\!\sum_{i=1}^\Ntheta \lambda_i^4 h_i(\theta) \in\sos{x,\theta,v_1} \label{eq:opt-max-ellipsoid-constraint-joint} \\
   &   \lambda_i^4,  \mu_j^4 \in\sos{x,\theta,v_1},\ \forall i\in\mbz{N_\theta},  j\in\mbz{n}
     \end{align}
\end{subequations}}
Condition \cref{eq:opt-max-ellipsoid-constraint-joint} ensures $X\geq X_0$ and therefore $x^TX^{-1}x \leq x^TX_0^{-1}x$ for any $x\in\mcX,\ \theta\in\Theta$. As a result, $\Omega(x^TX_0^{-1}x,1) \cap (\mcX\times \Theta) \subseteq \Omega(V,1) \cap (\mcX\times \Theta) $.

To summarize, to obtain a PD  controller (a PD-CLF) to exponentially stabilize the system 
while maximizing the PD-ROA (PD-ROS), we solve the following SOS program: \vspace{1 mm}\\ 
{
\fbox{%
\parbox{0.99\columnwidth}{%
\begin{subequations}\label{eq:opt-joint-syn-sos}
  \begin{align}
    & \hspace{-8mm}\max_{
    \footnotesize
    \begin{array}{c}
      \varepsilon_i\!>\!0~(i\in\mbz{3}), X_0\!>\!0,   \alpha_i^j ~(i\in\mbz{\Ntheta},j\in\mbz{n})
      \\
      X(\tilde x,\theta)
      , Y(x,\theta),
      \lambda_i^l\ (i\in\mbz{\Ntheta}\!, l\in\mbz{6}),  
      \\      
      \mu_j^p\  (i\in\mbz{\Ntheta}, p\in\mbz{4}), \xi_k\ (k\in\mbz{\ntheta}) 
    \end{array}
    } \!\!\log \det(X_0) \label{eq:opt-joint-syn-sos-1} \\
     \textup{s.t. } & \textup{Constraints \cref{eq:joint-syn-sos-1,eq:joint-syn-sos-2,eq:joint-syn-sos-3,eq:joint-syn-sos-state-cst,eq:opt-max-ellipsoid-constraint-joint}}   
     \\
     & \textup{all multipliers are SOS}.
\end{align}
\end{subequations}} 
}

\subsection{QP-based control law vs pre-defined control law}
Since the resulting Lyapunov function  from solving \cref{eq:opt-joint-syn-sos} is a PD-CLF according to \cref{lemma:LF-is-CLF}, we can again construct the control signal by solving a QP similar to \cref{eq:min-norm-formulation}. From \cref{eq:Vdot-expression-final}, the QP becomes
\begin{subequations}\label{eq:min-norm-formulation-w-u-limit}
  \begin{align}
 &u^\ast(x,\theta) = \min_{u}\  u^T H(\theta) u \\
    \textup{s.t. } & L_f V\!+ \!L_g Vu \!+\! \frac{\partial V}{\partial \theta }v 
\leq -\varepsilon_3 x^T X^{-1} X^{-1} x. \label{eq:min-norm-formulation-w-u-limit:b}
     \end{align}
\end{subequations}
where we also introduce a $\theta$-dependent positive definite matrix $H(\theta)$ to make the cost function more general.  Notice that \label{eq:min-norm-formulation-w-u-limit:b} can be replaced by $L_f V\!+ \!L_g Vu \!+\! \frac{\partial V}{\partial \theta }v  \leq -\frac{\varepsilon_3}{\varepsilon_2^2} \norm{x}^2, \ \forall v \in \textup{ver}(\mcV)$, which is a more relaxed condition and will lead to more conservative behavior.  
\begin{remark}
 The QP problem  \cref{eq:min-norm-formulation-w-u-limit} is guaranteed to be feasible for any $(x,\theta)\in\Omega(V,1)$ due to \cref{them:npv-synthesis-sos}. 
\end{remark}
Since an input control law defined in \cref{eq:control-law-Y-X} is simultaneously obtained when solving \cref{eq:opt-joint-syn-sos} to obtain the PD-CLF, one may wonder why we still need to resort to the QP \cref{eq:min-norm-formulation-w-u-limit} to construct the min-norm control law. The min-norm may be preferred when avoiding input saturation is a concern, as min-norm control law typically generates small control signal as demonstrated in case studies in \cref{sec:sim}). Additionally, min-norm control law may improve the robustness of the closed-loop system against input uncertainty, as demonstrated in \cite{jankovic1999clf-robustness}. 

 \section{Simulation Results}\label{sec:sim}
We used Matlab 2023a with YALMIP \cite{YALMIP} and Mosek \cite{andersen2000mosek} 
to solve all the optimization problems.

\subsection{Numerical example}
We first consider a two-dimensional toy example 
\begin{equation}
   \dot x = \begin{bmatrix}
      \dot x_1 \\
      \dot x_2 
   \end{bmatrix} = \begin{bmatrix}
      \theta(t) x_2 \\
       19.62\left(x_1-{x_1^3}/{6}\right)-8x_2
   \end{bmatrix} 
   + \theta(t)  \begin{bmatrix}
       0 \\
      40
   \end{bmatrix} u.
\end{equation}
where $\theta(t)\in\Theta = [0.05,1]$ and $|\dot\theta(t)|\leq 0.1$. Note that $\theta=0.05$
 indicates a loss of 95\% control effectiveness compared to the scenario with $\theta=1$ from the input matrix. To find a PD-CLF, we represented the dynamics in the linear-like form \cref{eq:dynamics-linear-form} and then solved the SOS problem \cref{eq:opt-joint-syn-sos} to maximize the region of stabilization. We selected the decision variables as $X(x_1,\theta)$ and $Y(x,\theta)$, both of which have the order of 2, and the set $\mcX$ to be $[-5,5]\times[-5,5]$.
 The resulting PD-CLF is given by $V(x,\theta) = x^T X^{-1}(x_1,\theta)x$, where 
$$  X(x_1,\theta) =  \begin{bmatrix}
 0.81 & 
-1.79 \\
-1.79 & 24.82       
   \end{bmatrix}  +
   \begin{bmatrix}
-1.16 & 
-3.32 \\
-3.32 & -0.34      
   \end{bmatrix} \theta   + 
10 
^{-4}       \begin{bmatrix}
7 & -114\\
 -114 & 172 
\end{bmatrix} x_1^2 
+ \begin{bmatrix}
12.12 & -0.50 \\
-0.50 & 0.18   
\end{bmatrix} \theta^2.  $$
Note that solving the optimization problem \cref{eq:opt-joint-syn-sos} also gives a control law 
\begin{equation}\label{eq:control-law-pendulum}
   u(x,\theta) = Y(x,\theta)X^{-1}(x_1,\theta)x.
\end{equation}
  The PD region of stabilization (ROS) determined by $V(x,\theta)$ as $\Omega(V,1)=\{(x,\theta): V(x,\theta)\leq 1, \theta\in\Theta\}$ is illustrated in \cref{fig:roa-pendulum}. \cref{fig:roa-pendulum} also includes the projection of  $\Omega(V,1)$ onto the $x$-plane with different $\theta$ values as well as the robust ROS, which is obtained by removing the dependence of the decision variables $X$ and $Y$ on $\theta$ in solving the SOS problem \cref{eq:opt-joint-syn-sos}. We can see that the robust ROS almost overlaps with the projection of the PD-ROS onto $\theta =0.05$, which is the smallest among all projected regions.  Using the PD-ROS leads to an enlarged set of states from which the system can be stabilized by considering the initial values of $\theta$, i.e., $\theta(0)$.
\begin{figure}[h]
    \centering
    \vspace{-5mm}
\includegraphics[width=0.45\columnwidth]{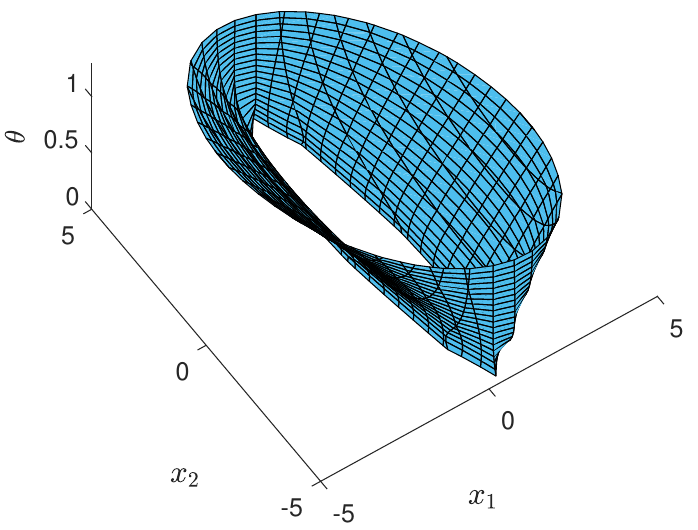} 
\includegraphics[width=0.47\columnwidth]{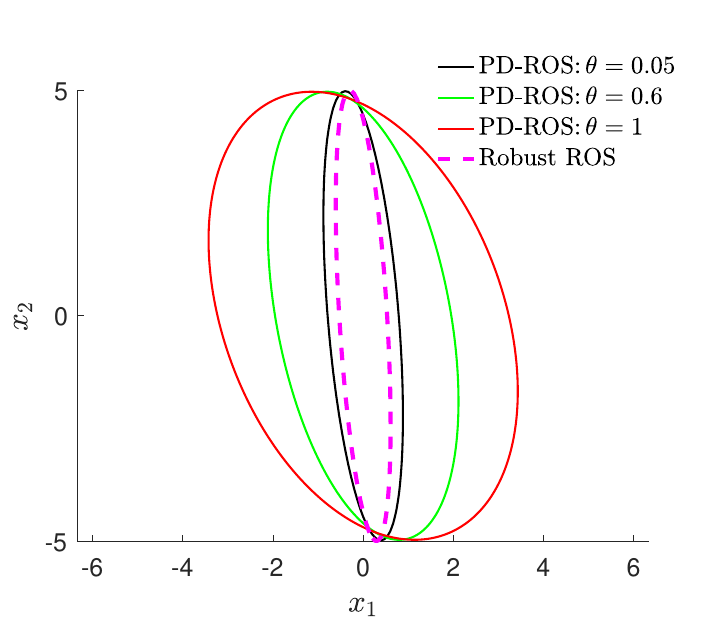} 
    \caption{PD region of stabilization (PD-ROS) (left) and its projection onto $x$-planes with different $\theta$-values together with the robust ROS (right). Robust ROS overlaps with the smallest projection of the PD-ROS, corresponding to $\theta=0.0.05$}
    \label{fig:roa-pendulum}
    \vspace{-5mm}
\end{figure}

To demonstrate this, we set the parameter trajectory to be $\theta(t) = 0.6+0.4\cos(0.2t)$, which leads to $\theta(0) = 1$. We pick four initial states $x(0)$ such that $(x(0)$ and $\theta(0))$ are in the PD-ROS, i.e., $(x(0),\theta(0))\in \Omega(V,1)$, but are not in the robust ROS. \cref{fig:toy-traj} illustrates the trajectories of states, Lyapunov function, and control input under the control law \cref{eq:control-law-Y-X} and the min-norm control law, respectively. Under both control laws, the system is successfully stabilized to the origin for all four initial states. However, the pre-defined control law and min-norm control law . In particular, the min-norm control inputs are small at the very beginning. In fact, the min-norm inputs are zero at the first 0.03 seconds for all four initial states. This is possible because the CLF condition is satisfied by the dynamics even in the absence of active control input at the very beginning. 
\begin{figure}[H]
    \centering
    \begin{subfigure}[b]{0.42\columnwidth}
        \centering
\includegraphics[width= 1\columnwidth]{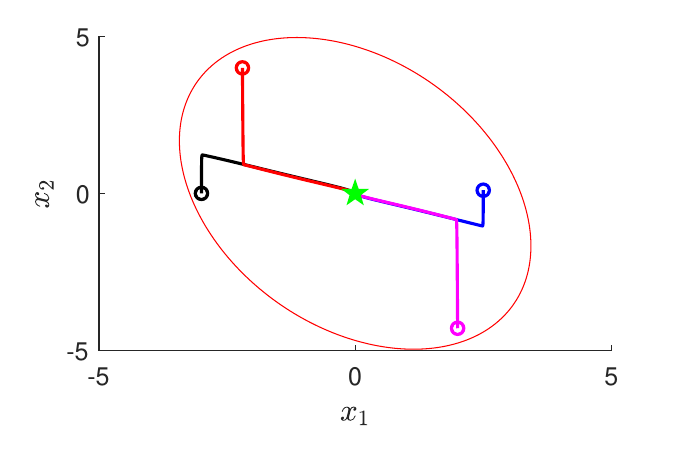} \\ 
\includegraphics[width= 1\columnwidth]{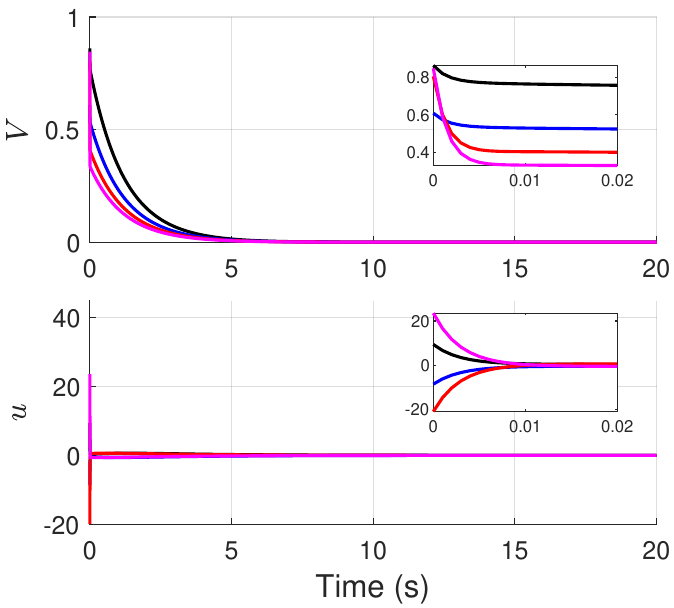}
        \caption{Pre-defined control law \cref{eq:control-law-Y-X}}
        \label{fig:toy-predefine-u}
    \end{subfigure}
      \begin{subfigure}[b]{0.42\columnwidth}
    \includegraphics[width=1\columnwidth]{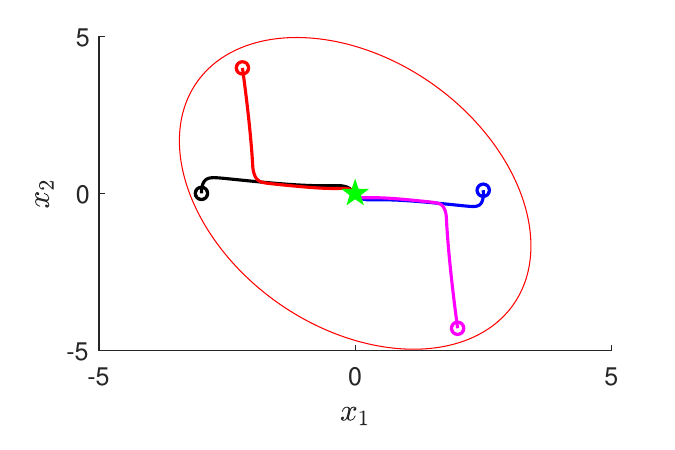} \\ 
\includegraphics[width= 1\columnwidth]{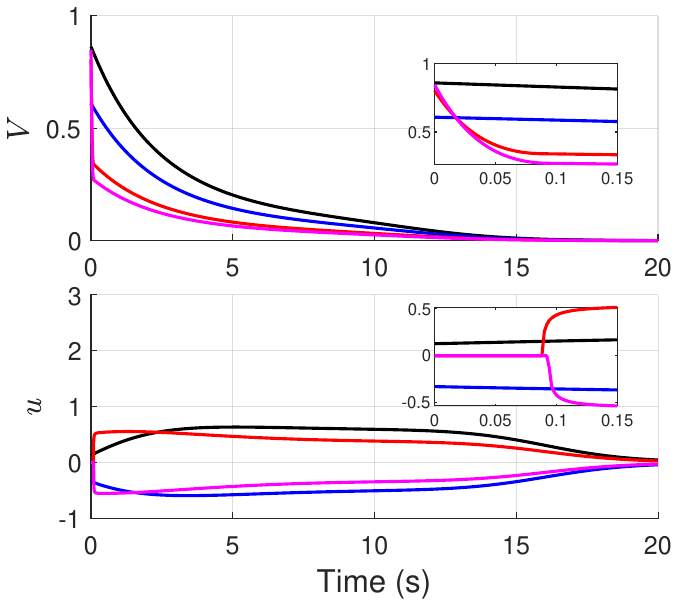}
        \caption{Min-norm control law \cref{eq:min-norm-formulation}}
        \label{fig:toy-predefine-u}
    \end{subfigure}
    \caption{Trajectories of states (top), Lyapunov function (middle) and control input (bottom) starting from four initial points under the pre-defined control law \cref{eq:control-law-Y-X} (left) and the min-norm control law \cref{eq:min-norm-formulation} (right).}
    \label{fig:toy-traj}
\end{figure}

\subsection{Rocket landing under varying mass and inertia   }\label{sec:rocket}
We now consider a 2-D rocket landing problem. The dynamics of the rocket are described by

\begin{equation}\label{eq:rocket-dyanmics}
  \begin{bmatrix}
\dot{y} \\
\dot{z} \\
\dot{\phi} \\
\dot{v}_y\\
\dot{v}_z \\
{\ddot{\phi}}
\end{bmatrix}
=
{\begin{bmatrix}
v_y \\
v_z \\
\dot{\phi} \\
0 \\
-g \\
0
\end{bmatrix}}
+
\begin{bmatrix}
0 & 0 \\
0 & 0 \\
0 & 0 \\
\frac{1}{m/\theta} & 0 \\
0 & \frac{1}{m/\theta } \\
-\frac{l\cos{\phi}}{J/\theta} & \frac{l\sin{\phi}}{J/\theta}
\end{bmatrix}
\begin{bmatrix}
F_y \\
F_z
\end{bmatrix},  
\end{equation}
where $y$, $z$ and $\phi$ are the position in $y$ axis,  position in $z$ axis and the angle between the rocket body and upwards direction,  $m=1$, $l=1$, and $J=\frac{m}{12}(2l)^2$ denotes the mass, the length from center of mass (COM) to the thruster, and inertia of the rocket. Here we assume that the COM is in the middle of the rocket and is time-invariant to simplify the modeling. $g=9.81$ is the gravitational constant, and  $F_y$ and $F_z$ denotes the thrust forces in the $y$ and $z$ axes, respectively. The parameter $\theta$ reflects the mass and inertia change due to fuel consumption, and is set to be $\theta(t)\in\Theta=[1,5]$, $\dot\theta(t)\in[0,0.1]$ for all $t\geq 0$. Note that $\theta=5$ corresponds to the mass and inertia reduced to 20\% of those associated with $\theta=1$.  

\begin{figure}[ht]
    \centering
    \begin{subfigure}[b]{0.4\columnwidth}
        \centering
\includegraphics[width= 1\columnwidth]{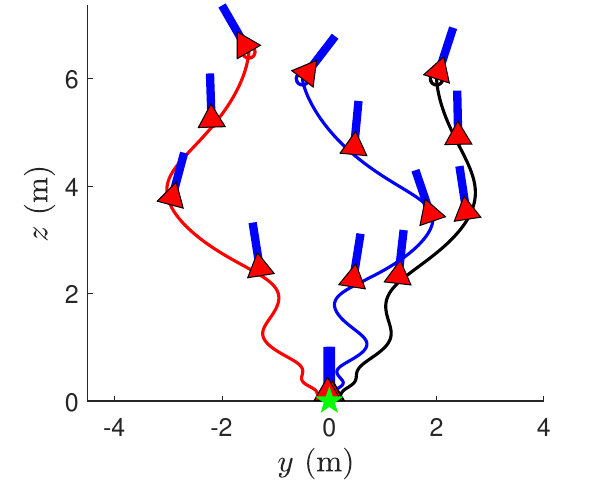}\\ 
\includegraphics[width= 1\columnwidth]{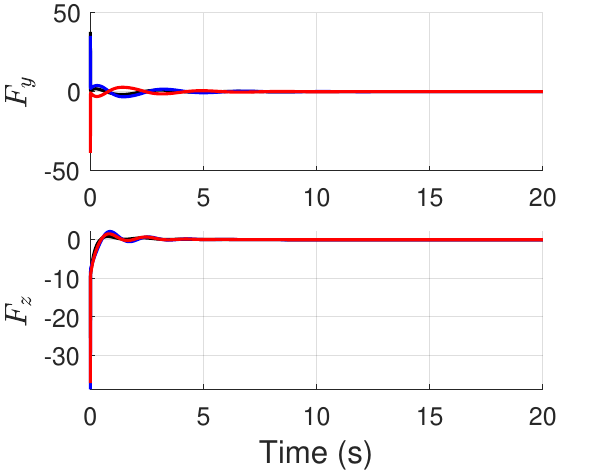}
        \caption{Pre-defined control law \cref{eq:control-law-Y-X}}
        \label{fig:rocket-predefine-u}
    \end{subfigure}
      \begin{subfigure}[b]{0.4\columnwidth}
    \includegraphics[width=1\columnwidth]{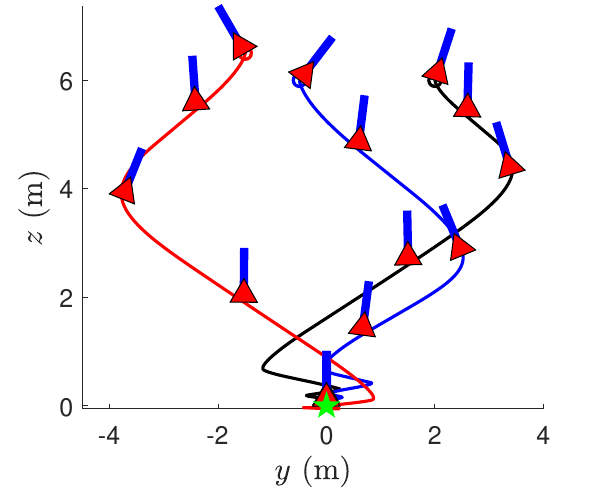} \\ 
\includegraphics[width= 1\columnwidth]{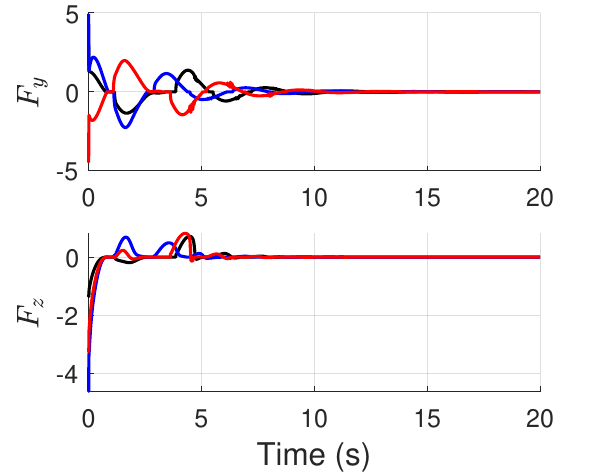}
        \caption{Min-norm control law \cref{eq:min-norm-formulation}}
        \label{fig:rocket-minnorm-u}
    \end{subfigure}
    \caption{Trajectories of states (top) and control inputs (bottom) starting from four initial points under the pre-defined control law \cref{eq:control-law-Y-X} (left) and the min-norm control law \cref{eq:min-norm-formulation} (right).}
    \label{fig:rocket-traj}
    \vspace{-6mm}
\end{figure}

To remove the constant $g$ so that we could rewrite the dynamics in the linear-like form \cref{eq:dynamics-linear-form}, we define $F_z = \hat F_z + \frac{mg}{\theta}$, and rewrite the dynamics in the form of new control input $\hat u = [F_y, \hat F_z]^T$. With Taylor series expansion to approximate the trigonometric functions, the new dynamics can further be rewritten in the linear-like polynomial form \cref{eq:dynamics-linear-form}. In solving the SOS problem \cref{eq:opt-joint-syn-sos}, we selected the Lyapunov and controller matrix variables as $X(\phi,\theta)$ and $Y(\phi,\theta)$, both of which have the order of 2. We decided not to include more dependent states in $X$ and $Y$ to reduce the computational load. The local synthesis set $\mcX$ is selected to be $\mcX = \left\{x\in\mbR^6: y^2+z^2\leq 36,~\phi^2+(\dot\phi)^2\leq (\frac{\pi}{3})^2,~v_y^2+v_z^2\leq 4\right\}$. 

After getting the PD-CLF and the PD controller from solving \cref{eq:opt-joint-syn-sos}, we simulated the system starting from initial states $x(0)=[y_0,z_0, \phi_0, 0, -0.5, 0]^T$ under the trajectory $\theta(t) =1+0.08t$. The state and input trajectories for three cases of $(y_0,z_0, \phi_0)$ under the predefined control law and min-norm control law, are shown in \cref{fig:rocket-traj}. Notice that $x(0)$ for all three cases is actually out of the set $\mcX$ used in the synthesis procedure. However, both control laws were able to land the rocket successfully in all these cases. Additionally, the magnitude of the min-norm control signal is significantly smaller compared to that of the predefined control law. This reduction offers notable advantages in certain scenarios, particularly in avoiding input saturation.

\section{Conclusions}\label{sec:conclusion}
This paper presented an approach to gain-scheduled stabilization of nonlinear parameter-varying (NPV) systems through parameter-dependent (PD) control Lyapunov functions (CLFs). For control-affine polynomial NPV systems, it introduces convex conditions based on sum of squares (SOS) programming to 
jointly synthesize a 
PD-CLF  and a stabilizing controller. Simulation results validated the effectiveness of the proposed approach, demonstrating its potential for enhancing the stabilization of NPV systems. The proposed PD-CLF provides a way to control nonlinear systems with large operating envelopes. Compared to the existing LPV approach, it does not need linearization or over-approximation that is typically required to obtain an LPV system. 

Future work includes 
consideration of potential errors in measuring and estimating the scheduling parameters that have been studied in the LPV setting \cite{Sato13Inexact,Zhao17Inexact}.
\bibliographystyle{ieeetr}
\bibliography{bib/refs-pan,bib/refs-new}

\end{document}